\theoremstyle{plain}
\newtheorem{thm}{Theorem}[section]
\theoremstyle{definition}
\newtheorem{dfn}[thm]{Definition}
\newtheorem{rem}[thm]{Remark}
\theoremstyle{plain}
\newtheorem{lem}[thm]{Lemma}
\newtheorem{prop}[thm]{Proposition}
\theoremstyle{definition}
\newcommand{\R}{\mathbb{R}}
\newcommand{\Z}{\mathbb{Z}}
\newcommand{\K}{\mathbb{K}}
\newcommand{\G}{\mathbb{G}}
\numberwithin{equation}{section}
\title{On the Upsilon invariant in grid homology}
\author{Hajime Kubota}
\date{}
\subjclass{57K18}
\keywords{grid homology; knot Floer homology; K\"{u}nneth formula; spatial graph}
\begin{document}

\begin{abstract}
The Upsilon invariant is a concordance invariant in knot Floer homology.
F\"{o}ldv\'{a}ri \cite{The-knot-invariant-Upsilon-using-grid-homologies} reconstructed the Upsilon invariant using grid homology.
We prove that the Upsilon invariant in knot Floer homology and one in grid homology are equivalent.
Furthermore, we show some properties of the Upsilon invariant in the framework of grid homology.
\end{abstract}

\maketitle

\section{Introduction}

Ozsv\'{a}th-Stipsicz-Szab\'{o} \cite{Concordance-homomorphisms-from-knot-Floer-homology} defined the Upsilon invariant $\Upsilon_K\colon[0,2]\to\mathbb{R}$ for a knot $K\subset S^3$.
In this paper, we call it \textit{the original Upsilon invariant}.
The original Upsilon invariant is a smooth concordance invariant and gives bounds for knot invariants such as the slice genus, concordance genus, and the unknotting number.
This invariant was first defined by constructing a family of chain complexes $\{tCFK(K)\}_{t\in[0,2]}$ from the knot Floer complex $CFK^-(K)$ and taking their homologies.
Later, Livingston \cite{Notes-on-the-knot-concordance-invariant-upsilon} gave an alternative formulation of the original Upsilon invariant using the full knot Floer complex $CFK^\infty(K)$.

Grid homology is a combinatorial reconstruction of knot Floer homology developed by Manolescu-Ozsv\'{a}th-Szab\'{o}-Thurston \cite{oncombinatorial}.
F\"{o}ldv\'{a}ri \cite{The-knot-invariant-Upsilon-using-grid-homologies} reconstructed the Upsilon invariant using grid homology and showed that it is an invariant for topological knots.
In this paper, we call it \textit{the grid Upsilon invariant}.
Later, the author \cite{Concordance-invariant-Upsilon-for-balanced-spatial-graphs-using-grid-homology} proved the concordance invariance of the grid Upsilon invariant in the framework of grid homology.

The definition of the grid Upsilon invariant and the definition of the original Upsilon invariant are very similar but there are subtle differences.
Therefore it is not obvious whether they are the same, even if there is a chain homotopy equivalence between grid complexes and knot Floer complexes.
F\"{o}ldv\'{a}ri \cite{The-knot-invariant-Upsilon-using-grid-homologies} raised the question of whether the grid Upsilon invariant is equivalent to the original Upsilon invariant and whether the known properties of the original Upsilon invariant are proved in the framework of grid homology.

Even though the equivalence of the grid and the original upsilon invariant was unknown, grid homology is useful to study the original Upsilon invariant.
Ozsv\'{a}th-Stipsicz-Szab\'{o} \cite{Unoriented-knot-Floer-homology-and-the-unoriented-four-ball-genus} gave a combinatorial description of the upsilon invariant $\upsilon(K):=\Upsilon_K(1)$.
They studied the behavior of the modified grid chain complex corresponding to the complex $tCFK(K)$ of $t=1$ under surface cobordisms and showed that $\upsilon(K)$ gives a lower bound for the smooth four-dimensional crosscap number of $K$.
Sano-Sato \cite{An-algorithm-for-computing-the-Υ-invariant-and-the-d-invariants-of-Dehn-surgeries} computed the original Upsilon invariant for prime knots with up to $11$ crossings.
We remark that they used grid representations of knots but a different approach from this paper.

In this paper, we show that the grid Upsilon invariant equals the original Upsilon invariant using the equivalence of knot Floer homology and grid homology.
In addition, we prove some known properties of the original Upsilon invariant in the framework of grid homology.
Because the proof is done algebraically, we will not review the precise definition of grid homology.
See \cite{oncombinatorial} or \cite{Grid-homology-for-knots-and-links} for details.

\begin{thm}
\label{thm:Upsilon}
For a knot $K \subset S^3$, the grid Upsilon invariant coincides with the original Upsilon invariant.
\end{thm}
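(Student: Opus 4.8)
The plan is to recognize both invariants as outputs of one algebraic construction and then to show that the grid complex and the knot Floer complex of $K$ feed that construction equivalent data. First I would set up the construction abstractly: from a finitely generated free chain complex over $\mathbb{F}[U]$ equipped with a Maslov grading and an Alexander filtration, one forms the associated ``$t$-modified'' complex over a one-variable ground ring with the real grading $\mathrm{gr}_t = M - tA$; after taking homology and localizing, one reads off a number from the $\mathrm{gr}_t$-grading at which the distinguished generator of the localized homology lies. The preliminary point is that this number depends only on the filtered chain homotopy type of the original $\mathbb{F}[U]$-complex. This is close to statements already in the literature (for the hat theory it is in the grid homology book, and for $CFK^\infty$ it is implicit in Livingston's reformulation), but I would state and prove it in exactly the generality needed. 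Granting it, both the family $\{tCFK(K)\}_{t\in[0,2]}$ of Ozsv\'{a}th--Stipsicz--Szab\'{o} (in Livingston's $CFK^\infty$ formulation) and F\"{o}ldv\'{a}ri's $t$-modified grid complex are values of this construction, so Theorem~\ref{thm:Upsilon} reduces to comparing the filtered complexes $CFK^-(K)$ and $GC^-(\mathbb{G})$ attached to a grid diagram $\mathbb{G}$ of $K$.

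For this comparison I would invoke the Manolescu--Ozsv\'{a}th--Szab\'{o}--Thurston equivalence in its chain-level, filtered form: if $\mathbb{G}$ has grid number $n$, then $GC^-(\mathbb{G})$ is filtered chain homotopy equivalent to $CFK^-(K) \otimes_{\mathbb{F}} W^{\otimes (n-1)}$, where $W$ is the two-generator bigraded $\mathbb{F}$-vector space, with generators in bidegrees $(0,0)$ and $(-1,-1)$, contributed by the $n-1$ destabilizations. Passing to $t$-modified complexes, a K\"{u}nneth formula for the $t$-modified theory yields $tGC^-(\mathbb{G}) \simeq tCFK^-(K) \otimes_{\mathbb{F}} (tW)^{\otimes (n-1)}$, and one then computes the effect of the $n-1$ extra factors: each contributes generators in $\mathrm{gr}_t$-gradings $0$ and $t-1$, hence an explicit, $(n,t)$-dependent shift of the $\mathrm{gr}_t$-grading at which the distinguished generator of the localized homology sits. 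The heart of the argument is to check that the grading normalization built into F\"{o}ldv\'{a}ri's definition of the grid Upsilon invariant -- which itself carries a shift depending on the grid number -- absorbs this contribution exactly, so that the number extracted from $tGC^-(\mathbb{G})$ equals the one extracted from $tCFK^-(K)$ for every $t \in [0,2]$.

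The main obstacle, then, is not the formal K\"{u}nneth manipulation but this bookkeeping of gradings together with the reconciliation of conventions. F\"{o}ldv\'{a}ri's construction lives over the polynomial ring in $n$ variables $V_1,\dots,V_n$, with a $t$-modification using a single formal parameter and a weighting by the Alexander and Maslov data, and one must verify that, after the MOST identification and the collapse $V_1 = \dots = V_n$, this reproduces Ozsv\'{a}th--Stipsicz--Szab\'{o}'s family exactly -- the same interval $[0,2]$, the same sign and slope of the $t$-weight, the same overall grading shift -- rather than a reparametrization $\Upsilon_K(\lambda t)$ or a constant multiple. I would fix the remaining constants by evaluating both constructions on the unknot and on the left- and right-handed trefoils, and then confirm the general equality by tracking the grading shifts through the destabilization maps that establish independence of the grid Upsilon invariant from the choice of grid diagram, since those are precisely the shifts that must line up with the $W$-factors above.
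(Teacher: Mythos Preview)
Your skeleton matches the paper's: reduce to a filtered chain homotopy equivalence over $\mathbb{F}[U]$, pick up a factor of $\mathcal{W}^{\otimes(n-1)}$, $t$-modify, and argue the extra factor is harmless. But three of your intermediate claims are off, and together they constitute a genuine gap.

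First, the MOST equivalence does not produce the $W$ factors: $\mathcal{GC}^-(\mathbb{G})$, viewed over $\mathbb{F}[U]$ with $U$ acting as $U_1$, is filtered chain homotopy equivalent to $CFK^-(K)$ \emph{itself}, with no stabilization tensorands. The factor $\mathcal{W}^{\otimes(n-1)}$ enters at a different point---when one passes from $\mathcal{GC}^-(\mathbb{G})$ to the quotient $\mathcal{GC}^-(\mathbb{G})/(U_1=\cdots=U_n)$, which is the object that actually gets $t$-modified in the grid definition. The relevant input is Lemma~\ref{lem:u1=un}, giving $\mathcal{GC}^-(\mathbb{G})/(U_1=\cdots=U_n)\simeq\mathcal{GC}^-(\mathbb{G})\otimes\mathcal{W}^{\otimes(n-1)}$; combined with MOST this yields $tGC^-(\mathbb{G})\simeq tCFK(K)\otimes W_t^{\otimes(n-1)}$. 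Second, there is no grid-number-dependent grading normalization in the definition of the grid Upsilon to absorb anything; it is simply the maximal $\mathrm{gr}_t$ of a homogeneous non-torsion class. The $W_t$ factors are harmless for the elementary reason that $W_t\cong\mathbb{F}_0\oplus\mathbb{F}_{t-1}$ has both summands in nonpositive degree when $t\in[0,1]$, so tensoring does not move the maximum---no constants need fixing on sample knots. Third, this same observation shows your argument cannot run uniformly over $[0,2]$: for $t>1$ the summand in degree $t-1>0$ shifts the maximum by $(n-1)(t-1)$, and there is nothing to cancel it. This is exactly why the grid Upsilon for $t\in(1,2]$ is \emph{defined} via the symmetry $\Upsilon_{\mathbb{G}}(t)=\Upsilon_{\mathbb{G}}(2-t)$ rather than directly from $H(tGC^-(\mathbb{G}))$; the paper's proof accordingly treats only $t\in[0,1]$ and then appeals to this definitional symmetry together with $\Upsilon_K(t)=\Upsilon_K(2-t)$.
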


We briefly explain the idea of the proof.
The original Upsilon invariant is defined by the following two steps \cite{Concordance-homomorphisms-from-knot-Floer-homology}:
First, we apply the algebraic operation called $t$-modification to obtain a one-parameter family of chain complexes $\{tCFK(K)\}_{t\in[0.2]}$ from the knot Floer complex $CFK^-(K)$.
Then the Upsilon invariant $\Upsilon_K$ at $t$ is defined by the maximal grading of the homogeneous, non-torsion element of $H(tCFK(K))$.
The grid Upsilon invariant is defined similarly.
The grid chain complex $GC^-(\G)$ is a finitely generated complex over $\mathbb{F}[U_1,\dots,U_n]$.
The $t$-modified grid chain complex $tGC^-(\G)$ is defined by the resulting complex by applying $t$-modification to $\frac{GC^-(\G)}{U_1=\dots=U_n}$.
Then the grid Upsilon invariant is defined by the maximal grading of the homogeneous, non-torsion element of $H(tGC^-(\G))$.
The following schematic picture describes the situation of the two Upsilon invariants.
\[
\xymatrix@C=40pt{
CFK^-(K)\ar[dd]_{t\mathrm{-modification}}\ar@{-}[r]^{\simeq} & GC^-(\G)\ar[d]^{\mathrm{quotient}} \\
{} & \frac{GC^-(\G)}{U_1=\dots=U_n}\ar[d]^{t\mathrm{-modification}} \\
tCFK(K) \ar[d]_{H(\bullet)} \ar@{.}[r]^? & tGC^-(\G)\ar[d]^{H(\bullet)} \\
\Upsilon_K(t) & \Upsilon_\G(t)
}
\]
The essential parts of the proof are to give a relation between $tCFK(K)$ and $tGC^-(\G)$ using the chain homotopy equivalence $CFK^-(K)\simeq GC^-(\G)$ which implies the equivalence of knot Floer homology and grid homology and to observe that the effect of taking quotient $GC^-(\G)\to\frac{GC^-(\G)}{U_1=\dots=U_n}$ can be ignored in some sense.

If a grid diagram $\G$ represents a knot $K$, its horizontal reflection $\G^*$ represents the mirror image of $K$.
Then there is an isomorphism between $\frac{GC^-(\G^*)}{U_1=\dots=U_n}$ and the dual complex $(\frac{GC^-(\G)}{U_1=\dots=U_n})^*$ with some grading shift.
By lifting it to a relation between $t$-modified complexes $tGC^-(\G^*)$ and $(tGC^-(\G))^*$, we can obtain the following.
\begin{prop}
\label{prop:mirror}
For a knot $K$ and $t\in[0,2]$, the grid Upsilon invariant satisfies
\[
\Upsilon_{m(K)}(t)=-\Upsilon_K(t),
\]
where $m(K)$ is the mirror of $K$.
\end{prop}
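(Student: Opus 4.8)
The plan is to lift the stated isomorphism
\[
\frac{GC^-(\G^{*})}{U_1=\dots=U_n}\;\cong\;\left(\frac{GC^-(\G)}{U_1=\dots=U_n}\right)^{*}
\]
of $\mathbb{F}[U]$-chain complexes (with its grading shift, coming from the combinatorial duality of grid complexes; here $\G$ is a grid diagram of size $n$ for $K$ and $\G^{*}$ is its horizontal reflection, a size-$n$ grid diagram for $m(K)$) to an isomorphism of the $t$-modified complexes, and then to read off what happens on homology. Recall that $t$-modification turns the finitely generated free $\mathbb{F}[U]$-complex $\frac{GC^-(\G)}{U_1=\dots=U_n}$ into a complex over the ground ring $\mathcal{R}_t$ of the $t$-modified theory by a base change together with a new differential whose rectangle counts are weighted by powers of $v$ recording the number of $O$- and $X$-markings inside each rectangle. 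Since the complex is finitely generated and free over $\mathbb{F}[U]$, base change commutes with $\mathcal{R}_t$-dualization, so it is enough to check that the chain map underlying the grid duality isomorphism intertwines the two $t$-modified differentials and to compute the induced shift of the $\mathrm{gr}_t$-grading. The key point here is that horizontal reflection carries an empty rectangle to an empty rectangle with the same number of $O$- and $X$-markings, while dualization reverses the direction of the arrows; the two operations together preserve all $v$-weights, and they reverse the Maslov and Alexander gradings up to constants depending only on $n$. This should produce an isomorphism of graded $\mathcal{R}_t$-chain complexes $tGC^-(\G^{*})\cong\bigl(tGC^-(\G)\bigr)^{*}$, the dual on the right taken over $\mathcal{R}_t$, with an explicit $\mathrm{gr}_t$-grading shift $s=s(n,t)$.

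Next I would pass to homology: the ground ring $\mathcal{R}_t$ has the feature that a finitely generated module over it is a direct sum of a free part and a torsion part, with $\mathrm{Hom}_{\mathcal{R}_t}(\text{torsion},\mathcal{R}_t)=0$, so by the universal coefficient theorem the free part of $H\bigl((tGC^-(\G))^{*}\bigr)$ is the $\mathcal{R}_t$-dual of the free part of $H(tGC^-(\G))$, with all $\mathrm{gr}_t$-gradings negated, the $\mathrm{Ext}$-terms being torsion and hence irrelevant to the gradings of non-torsion elements. If $S$ denotes the set of $\mathrm{gr}_t$-degrees of a homogeneous basis of the free part of $H(tGC^-(\G))$, so that by definition $\Upsilon_K(t)=\max S$, this gives $\Upsilon_{m(K)}(t)=-\min S+s(n,t)$.

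Finally I would compare $\min S$ with $\max S$ and eliminate the constant. Unlike $H(tCFK(K))$, the homology $H(tGC^-(\G))$ typically does not have a rank-one free part: collapsing $GC^-(\G)$ to $\frac{GC^-(\G)}{U_1=\dots=U_n}$ has the effect of tensoring the homology with an auxiliary factor depending only on $n$ (a $t$-modified analogue of the $V^{\otimes(n-1)}$ that relates grid homology to knot Floer homology of the underlying knot), with the consequence that $S$ is a symmetric set whose top element is $\Upsilon_K(t)$; in particular $\min S=\Upsilon_K(t)-w(n,t)$ for a ``width'' $w(n,t)$ depending only on $n$ and $t$. Substituting, $\Upsilon_{m(K)}(t)$ equals $-\Upsilon_K(t)$ plus a quantity depending only on $n$ and $t$; applying this identity to the unknot, which is amphichiral with $\Upsilon\equiv 0$ and can be represented on a grid of every size $n\geq 2$, forces that quantity to be zero. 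Therefore $\Upsilon_{m(K)}(t)=-\Upsilon_K(t)$.

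I expect the main obstacle to be the compatibility check in the first step: verifying carefully that the $v$-weighted twisted differential of the $t$-modification is genuinely respected by the grid duality isomorphism, and correctly accounting for the $\mathrm{gr}_t$-grading shift, given that collapsing the variables $U_1=\dots=U_n$, dualizing, and $t$-modifying do not obviously commute. The subsidiary point — the symmetry of the set $S$ of generating degrees of the free part of $H(tGC^-(\G))$ — is what makes the constant in the last step cancel, and it too must be verified with care.
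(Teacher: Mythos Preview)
Your outline is correct and rests on the same three ingredients as the paper's proof: the filtered duality isomorphism $\frac{\mathcal{GC}^-(\G^*)}{U_1=\dots=U_n}\cong\bigl(\frac{\mathcal{GC}^-(\G)}{U_1=\dots=U_n}\bigr)^*\llbracket n-1,n-1\rrbracket$, the splitting $\frac{\mathcal{GC}^-(\G)}{U_1=\dots=U_n}\simeq\mathcal{GC}^-(\G)\otimes\mathcal{W}^{\otimes(n-1)}$ (Lemma~\ref{lem:u1=un}), and the universal coefficient theorem for $\mathcal{R}$-duals. The difference is one of bookkeeping. The paper stays at the level of $\mathbb{Z}$-filtered $\mathbb{F}[U]$-complexes, passes to a finitely generated model $\mathcal{C}\simeq\mathcal{GC}^-(\G)$ via Lemma~\ref{lem:finite-complex}, and then observes the tidy cancellation $(\mathcal{W}^*)^{\otimes(n-1)}\llbracket n-1,n-1\rrbracket\cong\mathcal{W}^{\otimes(n-1)}$, arriving directly at $\frac{\mathcal{GC}^-(\G^*)}{U_1=\dots=U_n}\simeq\mathcal{C}^*\otimes\mathcal{W}^{\otimes(n-1)}$; from there only $\Upsilon_{\,\cdot\,\otimes\mathcal{W}}=\Upsilon_{\,\cdot\,}$ and $\Upsilon_{\mathcal{C}^*}=-\Upsilon_{\mathcal{C}}$ are needed. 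You instead $t$-modify first (which amounts to invoking $(C^*)^t\cong(C^t)^*$ rather than checking rectangle weights by hand), carry an unspecified shift $s(n,t)$ and width $w(n,t)$, and kill their sum at the end by calibrating against an unknot on a size-$n$ grid. The calibration is sound and pleasantly avoids computing either constant, but the paper's cancellation explains structurally \emph{why} they match. One point you rightly flag as delicate---that $S$ has width independent of $K$---amounts to the free part of $H(\mathcal{C}^t)$ being concentrated in a single $\mathrm{gr}_t$-degree; note that the paper's step $\Upsilon_{\mathcal{C}^*}=-\Upsilon_{\mathcal{C}}$ encodes exactly the same fact and relies on it as well.
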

This proposition in knot Floer homology can be quickly proved because the taking reflection of the Heegaard diagram induces the chain homotopy equivalent $CFK^-(m(K))\simeq CFK^-(K)^*$ and then $tCFK(m(K))\simeq tCFK(K)^*$.
On the other hand, in grid homology, the situation is a little more complicated since we need to take quotient $GC^-(\G)\to\frac{GC^-(\G)}{U_1=\dots=U_n}$.

It is known that the original Upsilon invariant of an alternating knot is completely determined by the Alexander polynomial and the signature \cite[Theorem 1.14]{Concordance-homomorphisms-from-knot-Floer-homology}.
This is deduced by that knot Floer homology of an alternating knot is also determined by the Alexander polynomial and its signature, and Proposition \ref{prop:mirror} in knot Floer homology.
In fact, grid homology for an alternating knot is also proved to be determined by the Alexander polynomial and the signature (Theorem \ref{thm:quasi-GH-determined}).
Using this and Proposition \ref{prop:mirror}, we obtain that the Upsilon invariant of an alternating knot is completely determined by the Alexander polynomial and the signature in the framework of grid homology.

\begin{thm}
\label{thm:quasi-alt}
If $K$ is an alternating knot (or more generally, a quasi-alternating knot), then
\[
\Upsilon_K(t)=(1-|t-1|)\frac{\sigma(K)}{2},
\]
where $\sigma(K)$ is the signature of $K$.
\end{thm}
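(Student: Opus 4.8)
The plan is to use the thinness of grid homology to reduce the computation to a single staircase complex, and then to compute the Upsilon invariant of that staircase directly. First, by Theorem~\ref{thm:Upsilon} it suffices to compute the grid Upsilon invariant $\Upsilon_\G(t)$ for a grid diagram $\G$ of $K$, and by Proposition~\ref{prop:mirror} together with $\sigma(m(K))=-\sigma(K)$ we may assume $\sigma(K)\le 0$; set $g:=-\tfrac{1}{2}\sigma(K)\in\mathbb{Z}_{\ge 0}$. By Theorem~\ref{thm:quasi-GH-determined} the grid homology of the quasi-alternating knot $K$ is thin, so $\widehat{GH}(\G)$ is supported in the single $\delta$-grading (Maslov minus Alexander) equal to $\tfrac{1}{2}\sigma(K)=-g$. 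Passing through the chain homotopy equivalence $\frac{GC^-(\G)}{U_1=\dots=U_n}\simeq CFK^-(K)$ that underlies Theorem~\ref{thm:Upsilon} and applying the structure theorem for thin knot complexes, $\frac{GC^-(\G)}{U_1=\dots=U_n}$ is chain homotopy equivalent, as a bigraded complex over $\mathbb{F}[U]$, to a direct sum $\mathcal{S}(g)\oplus\mathcal{B}$, where $\mathcal{S}(g)$ is the length-$g$ staircase complex (its $2g$ steps all of unit length, bigrading normalized to lie in $\delta$-grading $-g$) and $\mathcal{B}$ is a finite direct sum of box complexes. Here the hypothesis $\sigma(K)\le 0$ forces the staircase summand to be $\mathcal{S}(g)$ rather than its mirror, and this summand is present and unique because the free part of $GH^-(K)$ has rank one.

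Since $t$-modification commutes with finite direct sums, $tGC^-(\G)\simeq tGC^-(\mathcal{S}(g))\oplus tGC^-(\mathcal{B})$. The $t$-modification of a box has torsion homology over the ground ring, so $\mathcal{B}$ does not affect the maximal grading of a homogeneous non-torsion element, whence $\Upsilon_\G(t)=\Upsilon_{\mathcal{S}(g)}(t)$. It then remains to carry out the model computation: writing out the $2g+1$ generators of $\mathcal{S}(g)$ with their Maslov and Alexander gradings, one identifies the generator of the free part of $H(tGC^-(\mathcal{S}(g)))$ and reads off its $t$-modified grading, obtaining $\Upsilon_{\mathcal{S}(g)}(t)=-g(1-|t-1|)$ (which is $0$ when $g=0$). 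Combining the steps, $\Upsilon_\G(t)=-g(1-|t-1|)=(1-|t-1|)\tfrac{\sigma(K)}{2}$, and by Theorem~\ref{thm:Upsilon} the same formula holds for $\Upsilon_K(t)$.

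The hard part will be the passage from the thinness of the homology $\widehat{GH}(\G)$ to the chain-level splitting $\frac{GC^-(\G)}{U_1=\dots=U_n}\simeq\mathcal{S}(g)\oplus\mathcal{B}$: this rests on the structure theorem for thin complexes over $\mathbb{F}[U]$ together with the grid/Floer chain homotopy equivalence, and one must track the $\delta$-grading carefully to confirm that it pins down the staircase to be exactly $\mathcal{S}(-\tfrac{1}{2}\sigma(K))$. The remaining ingredients — that $t$-modified boxes have torsion homology, and the grading bookkeeping in $H(tGC^-(\mathcal{S}(g)))$ — are routine but must be checked with care.
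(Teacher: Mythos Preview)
Your claimed chain homotopy equivalence $\frac{\mathcal{GC}^-(\G)}{U_1=\dots=U_n}\simeq CFK^-(K)$ is incorrect, and it is not the equivalence underlying Theorem~\ref{thm:Upsilon}. What Theorem~\ref{thm:grid=HFK} gives is $\mathcal{GC}^-(\G)\simeq CFK^-(K)$ over $\mathbb{F}[U]$; combining this with Lemma~\ref{lem:u1=un} one obtains
\[
\frac{\mathcal{GC}^-(\G)}{U_1=\dots=U_n}\ \simeq\ CFK^-(K)\otimes\mathcal{W}^{\otimes(n-1)}.
\]
Consequently your uniqueness claim for the staircase summand fails: the free part of the homology of $\frac{\mathcal{GC}^-(\G)}{U_1=\dots=U_n}$ has rank $2^{n-1}$, not one, so after applying the thin-complex structure theorem you obtain $2^{n-1}$ shifted copies of the staircase (together with boxes). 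This is exactly why the paper's proof tracks $2^{n-1}$ sequences $x^i_0,\dots,x^i_n,y^i_0,\dots,y^i_{n-1}$ with shifts $k_i$ governed by the binomial coefficients $\binom{n-1}{i}$.

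The error is not fatal. Each $\mathcal{W}$-factor shifts bigradings by $(0,0)$ or $(-1,-1)$, hence preserves the $\delta$-grading and, for $t\in[0,1]$, only lowers $\mathrm{gr}_t$; so among the $2^{n-1}$ staircases the maximal $\mathrm{gr}_t$ of a non-torsion class is still realized by the unshifted copy $\mathcal{S}(g)$, and your formula survives. Once this is corrected, your argument coincides with the paper's: both use Proposition~\ref{prop:mirror} to assume $\sigma(K)\le 0$, use thinness (Theorem~\ref{thm:quasi-GH-determined}) to reduce to staircases, and read off $\mathrm{gr}_t$ on the top generator. The paper simply writes out the staircase generators $x^i_j,y^i_j$ explicitly rather than invoking a named structure theorem.
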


\section{Preliminaries}
\subsection{Algebraic terminology}
We fix a base ring $\K$.
(In this paper, we only consider $\K=\Z/2\Z$.)
\begin{dfn}
A \textbf{$\Z$-graded, $\Z$-filtered chain complex over $\K$} is a $\K$-module $C$ with the following structure:
\begin{itemize}
    \item a $\K$-module map $\partial\colon C\to C$ satisfying $\partial\circ\partial=0$;
    \item a $\Z$-grading $C=\bigoplus_{d\in\Z}C_d$ that is compatible with the differential in the sense that $\partial(C_d)\subseteq C_{d-1}$;
    \item the $\Z$ filtration $\dots\subseteq\mathcal{F}_sC\subseteq \mathcal{F}_{s+1}C\subseteq \cdots$ such that $\bigcup_{s\in\Z}\mathcal{F}_sC=C$;
    \item the filtration is compatible with the $\Z$-grading, let $\mathcal{F}_sC_d=(\mathcal{F}_sC)\cap C_d$ ,then $\mathcal{F}_sC=\bigoplus_{d\in\Z}\mathcal{F}_sC_d$;
    \item the filtration is compatible with the differential, $\partial(\mathcal{F}_sC)\subseteq\mathcal{F}_sC$;
    \item the filtration is bounded below.
\end{itemize}

Fix some integer $n\geq 0$.

A \textbf{$\Z$-graded, $\Z$-filtered chain complex over $\K[U_1,\dots,U_n]$} is a $\Z$-graded, $\Z$-filtered chain complex $C$ with $\K$-module maps $U_i\colon C\to C$ for $i=1,\dots, n$ satisfying the followings:
\begin{itemize}
    \item for all $1\leq i,j\leq n$, $U_1$ and $U_j$ commute each other;
    \item each $U_i$ satisfies $\partial\circ U_i=U_i\circ\partial$, $U_i(C_d)\subseteq C_{d-2}$, and $U_i(\mathcal{F}_sC)\subseteq \mathcal{F}_{s-1}C$.
\end{itemize}
\end{dfn}

\begin{dfn}
For a $\Z$-graded, $\Z$-filtered chain complex $(C,\partial)$, the \textbf{associated graded object} is the chain complex
\[
\mathrm{gr}(C)=\bigoplus_{d,s\in\Z}(\mathcal{F}_sC_d/\mathcal{F}_{s-1}C_d),
\]
equipped with the bigrading $\mathrm{gr}(C)_{d,s}=\mathcal{F}_sC_d/\mathcal{F}_{s-1}C_d$ and the differential $\mathrm{gr}(\partial)\colon \mathrm{gr}(C)\to \mathrm{gr}(C)$ induced by $\partial$.
\end{dfn}

\begin{dfn}
Let $f\colon C\to C'$ be a graded, filtered chain map between two $\Z$-graded, $\Z$-filtered chain complexes over $\K[V_1,\dots,V_n]$.
\begin{itemize}
    \item $f$ is called a \textbf{filtered quasi-isomorphism} if it is a graded, filtered chain map whose associated graded map $\mathrm{gr}(f)$ induces an isomorphism on homology.
    \item $f$ is called a \textbf{filtered chain homotopy equivalence} if it is graded, filtered chain map and chain homotopy equivalence.
\end{itemize}
\end{dfn}

\begin{prop}[{\cite[Proposition A.8.1]{Grid-homology-for-knots-and-links}}]
\label{prop:quasi-iso-homotopy}
Let $C$ and $C'$ be two $\mathbb{Z}$-graded, $\mathbb{Z}$-filtered chain complexes over $\mathbb{K}[U]$.
They are filtered chain homotopy equivalent over $\mathbb{K}[U]$ if and only if they are filtered quasi-isomorphic over $\mathbb{K}[U]$.
\end{prop}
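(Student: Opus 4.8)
The plan is to prove the two implications separately. The forward implication — a filtered chain homotopy equivalence is a filtered quasi-isomorphism — is the easy one and follows from functoriality of the associated-graded construction. For the converse I would pass to the mapping cone and reduce the statement to a cancellation lemma for filtered complexes over $\mathbb{K}[U]$.

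For the easy direction, suppose $f\colon C\to C'$ has a filtered chain homotopy inverse $g\colon C'\to C$ together with filtered, $\mathbb{K}[U]$-equivariant homotopies $P,P'$ of homological degree $1$ and filtration degree $\leq 0$ with $\mathrm{id}_C-gf=\partial P+P\partial$ and $\mathrm{id}_{C'}-fg=\partial P'+P'\partial$. Applying $\mathrm{gr}(-)$, which sends filtration-lowering maps to $0$ and is multiplicative on filtration-degree-zero maps, yields $\mathrm{id}-\mathrm{gr}(g)\,\mathrm{gr}(f)=\mathrm{gr}(\partial)\,\mathrm{gr}(P)+\mathrm{gr}(P)\,\mathrm{gr}(\partial)$ and the symmetric identity; hence $\mathrm{gr}(f)$ is a chain homotopy equivalence, in particular a quasi-isomorphism, so $f$ is a filtered quasi-isomorphism. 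In particular, a filtered chain homotopy equivalence always induces an isomorphism on the homology of the associated graded — a fact I will use below.

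For the converse, let $f\colon C\to C'$ be a filtered quasi-isomorphism and put $D=\mathrm{Cone}(f)$. Since $f$ has filtration degree $0$ one has $\mathrm{gr}(D)\cong\mathrm{Cone}(\mathrm{gr}(f))$, and since $\mathrm{gr}(f)$ is a quasi-isomorphism this cone is acyclic, i.e.\ $H_*(\mathrm{gr}(D))=0$. The converse follows from two facts. The first is the standard mapping-cone lemma: if $D=\mathrm{Cone}(f)$ is filtered contractible, i.e.\ $\mathrm{id}_D$ is filtered $\mathbb{K}[U]$-equivariantly chain homotopic to $0$, then reading off the blocks of such a contracting homotopy with respect to the splitting $D=C[1]\oplus C'$ produces a filtered chain homotopy inverse of $f$ together with the two required homotopies, each inheriting the filtration bound from the ambient homotopy; so $f$ is a filtered chain homotopy equivalence. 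The second is the reduction lemma: every $\mathbb{Z}$-graded, $\mathbb{Z}$-filtered complex over $\mathbb{K}[U]$ is filtered chain homotopy equivalent over $\mathbb{K}[U]$ to a \emph{reduced} complex $\bar D$, one whose differential strictly drops the filtration, so that $\mathrm{gr}(\partial_{\bar D})=0$. Granting this, $\mathrm{gr}(\bar D)$ carries the zero differential, so $\mathrm{gr}(\bar D)=H_*(\mathrm{gr}(\bar D))$, which by the remark at the end of the previous paragraph is isomorphic to $H_*(\mathrm{gr}(D))=0$; since the filtration is bounded below and exhaustive, a complex with vanishing associated graded vanishes, so $\bar D=0$ and $D$ is filtered contractible. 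Combining the two facts shows $f$ is a filtered chain homotopy equivalence.

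The crux, and the step I expect to be the main obstacle, is the reduction lemma. I would prove it by a Gaussian-elimination/cancellation argument on the associated graded: because $\mathbb{K}$ is a field one can, working upward filtration level by filtration level — with the base case supplied by the bottom level, where $\mathcal{F}_{s_0}D=\mathrm{gr}_{s_0}(D)$ since the filtration is bounded below — split off pairs of generators that $\mathrm{gr}(\partial)$ identifies isomorphically; each cancellation lifts to a filtered, $\mathbb{K}[U]$-equivariant chain homotopy equivalence from $D$ onto a complex with strictly smaller $\mathrm{gr}(\partial)$, and iterating exhausts it. The delicate points are (a) choosing the changes of basis to be simultaneously $\mathbb{K}[U]$-linear, homogeneous for the $\mathbb{Z}$-grading, and filtration-compatible — here one uses that the complexes arising in this paper are finitely generated and free over the polynomial ring, so the module-theoretic subtleties over $\mathbb{K}[U]$ do not intervene and the cancellation is literally elimination over the field $\mathbb{K}$ — and (b) making sense of the iteration, which is controlled by the filtration being bounded below together with finite generation. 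Everything past these two points, including the block-matrix verification of the mapping-cone lemma used above, is routine bookkeeping with grading and filtration shifts.
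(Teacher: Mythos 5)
The paper does not prove this proposition; it is quoted from Ozsv\'ath--Stipsicz--Szab\'o's book (Proposition A.8.1 there), and your argument is essentially the standard proof from that source: the forward direction by functoriality of $\mathrm{gr}$, and the converse by the reduction lemma (every such complex is filtered homotopy equivalent to one whose differential strictly drops filtration) together with the observation that a bounded-below, exhaustive filtered complex with vanishing associated graded is zero. The only structural difference is that you route the converse through $\mathrm{Cone}(f)$ rather than reducing $C$ and $C'$ separately and noting that a filtered quasi-isomorphism between reduced complexes induces an isomorphism on $\mathrm{gr}$ and hence is a filtered isomorphism; the two routes are interchangeable, and yours has the small advantage of making it transparent that the original map $f$ itself acquires a filtered homotopy inverse, which is exactly the refinement the paper records in the remark after the statement. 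One caveat on your proof of the reduction lemma: you lean on finite generation over $\mathbb{K}[U]$, but in the paper's actual application (Lemma \ref{lem:u1=un}) one of the two complexes, $\mathcal{GC}^-(\G)\otimes\mathcal{W}^{\otimes(n-1)}$ regarded over $\mathbb{F}[U]=\mathbb{F}[U_1]$, is free of infinite rank; the cancellation argument still terminates in the relevant sense because each $\Z$-graded piece is finite-dimensional over $\mathbb{F}$ and the filtration is bounded below, and that is the finiteness hypothesis you should invoke instead.
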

Note that the proof of this proposition in \cite{Grid-homology-for-knots-and-links} says that if $f\colon C \to C'$ is a filtered quasi-isomorphism, then $f$ is a filtered chain homotopy equivalence.

\subsection{The t-modified chain complex}
This subsection describes $t$-modification, an algebraic operation to construct a one-parameter family of chain complexes from a $\Z$-graded, $\Z$-filtered chain complex over $\mathbb{F}[U]$.

A set $A\subset\mathbb{R}$ is \textit{well-ordered} if any subset $A'\subset A$ has a minimal element.
Let $\mathbb{R}_{\geq0}$ denote the set of nonnegative real numbers.
\begin{dfn}[{\cite[Definition 3.1]{Concordance-homomorphisms-from-knot-Floer-homology}}]
\label{basedring}
The ring of long power series $\mathcal{R}$ is defined as follows.
$\mathcal{R}$ is the group of formal sums
\[
\left\{\sum_{\alpha\in A}v^\alpha|A\subset\mathbb{R}_{\geq0},A\ \text{is well-ordered}\right\},
\]
where the sum in $\mathcal{R}$ is given by the formula
\[
\left(\sum_{\alpha\in A}v^\alpha\right)+\left(\sum_{\beta\in B}v^\beta\right)=\sum_{\gamma\in C=A\cup B\setminus A\cap B}v^\gamma.
\]
The product is defined by
\[
\left(\sum_{\alpha\in A}v^\alpha\right)\cdot\left(\sum_{\beta\in B}v^\beta\right)=\sum_{\gamma\in A+B}\#\{(\alpha,\beta)\in A\times B|\alpha+\beta=\gamma\}\cdot v^\gamma,
\]
where
\[
A+B=\left\{\gamma|\gamma=\alpha+\beta\ \text{for some }\alpha\in A\text{ and }\beta\in B\right\}.
\]
\end{dfn}

Suppose $C$ is a finitely generated, $\Z$-graded, $\Z$-filtered chain complex over $\mathbb{F}[U]$.
Let $\mathbf{x}$ be a generator of $C$ over $\mathbb{F}[U]$, with the $\Z$-grading $M(\mathbf{x})$.
Since multiplication by $U$ drops the $\Z$-grading by 2, any element of the $\Z$-grading $M(\mathbf{x})-1$ is written as a linear sum of elements of the form $U^{\frac{M(\mathbf{y})-M(\mathbf{x})+1}{2}}\mathbf{y}$, where $\mathbf{y}$ is a generator.
Therefore the differential on $C$ can be written as
\begin{equation}
\label{dd}
\partial(\mathbf{x})=\sum_{\mathbf{y}}c_{\mathbf{x},\mathbf{y}}\cdot U^{\frac{M(\mathbf{y})-M(\mathbf{x})+1}{2}}\mathbf{y},
\end{equation}
where $c_{\mathbf{x},\mathbf{y}}\in\{0,1\}$.

\begin{dfn}[{\cite[Definition 4.1]{Concordance-homomorphisms-from-knot-Floer-homology}}]
\label{dfn:t-mod}
Suppose that $C$ is a finitely generated, $\Z$-graded, $\Z$-filtered chain complex over $\mathbb{F}[U]$.
We regard that $\mathcal{R}$ contains $\mathbb{F}[U]$ by $U=v^2$.
For $t\in[0,2]$, \textit{The $t$-modified chain complex} $C^t$ of $C$ is defined as follows$\colon$
\begin{itemize}
\item As an $\mathcal{R}$-module, $C^t=C\otimes_{\mathbb{F}[U]}\mathcal{R}$
\item For each generator $\mathbf{x}$ of $C$, define $\mathrm{gr}_t(v^\alpha\mathbf{x})=M(\mathbf{x})-tA(\mathbf{x})-\alpha$
\item Endow the graded module $C^t$ with a differential
\begin{equation}
\label{differential_formal}
\partial_t(\mathbf{x})=\sum_{\mathbf{y}}c_{\mathbf{x},\mathbf{y}}\cdot v^{\mathrm{gr}_t(\mathbf{y})-\mathrm{gr}_t(\mathbf{x})+1}\mathbf{y},
\end{equation}
where $c_{\mathbf{x},\mathbf{y}}$ are determined by (\ref{dd}).
\end{itemize}
\end{dfn}

\begin{prop}[{\cite[Proposition 4.4]{Concordance-homomorphisms-from-knot-Floer-homology}}]
\label{prop:f-f^t}
Let $f\colon C\to D$ be a $\Z$-graded, $\Z$-filtered chain map between two chain complexes over $\mathbb{F}[U]$.
If $f$ is a filtered chain homotopy equivalence, then there is a corresponding chain homotopy equivalence $f^t\colon C^t\to D^t$.
\end{prop}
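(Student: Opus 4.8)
The plan is to promote $t$-modification to a functor on finitely generated $\Z$-graded, $\Z$-filtered chain complexes over $\mathbb{F}[U]$ together with their homogeneous, filtration-preserving $\mathbb{F}[U]$-module maps; once $t$-modification is defined on such maps and shown to be compatible with composition and identities, it automatically carries the chain homotopy equivalence $f$, together with a homotopy inverse and the two null-homotopies, to a chain homotopy equivalence $f^t$. Concretely, for a homogeneous filtered $\mathbb{F}[U]$-module map $\phi\colon C\to D$ of degree $d$ one writes, exactly as in (\ref{dd}),
\[
\phi(\mathbf{x})=\sum_{\mathbf{y}}c^\phi_{\mathbf{x},\mathbf{y}}\, U^{\frac{M(\mathbf{y})-M(\mathbf{x})-d}{2}}\mathbf{y},\qquad c^\phi_{\mathbf{x},\mathbf{y}}\in\{0,1\},
\]
which is a finite sum since $C$ and $D$ have finitely many generators over $\mathbb{F}[U]$, and one sets $\phi^t(\mathbf{x})=\sum_{\mathbf{y}}c^\phi_{\mathbf{x},\mathbf{y}}\, v^{\mathrm{gr}_t(\mathbf{y})-\mathrm{gr}_t(\mathbf{x})-d}\mathbf{y}$, extended $\mathcal{R}$-linearly to a map $\phi^t\colon C^t\to D^t$. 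For $d=-1$ this reproduces the differential $\partial_t$ of Definition \ref{dfn:t-mod}; for $d=0$ it is the recipe one wants for chain maps, and for $d=1$ for chain homotopies.

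First I would check that $\phi^t$ is well defined, i.e.\ that every exponent of $v$ it produces is a nonnegative real, so that $\phi^t$ genuinely takes values in the $\mathcal{R}$-module $D^t$. Put $k=\frac{1}{2}\bigl(M(\mathbf{y})-M(\mathbf{x})-d\bigr)$; this is a nonnegative integer, since $U^k\mathbf{y}$ occurs in the $\mathbb{F}[U]$-module $D$. Because $\phi$ preserves the filtration, whenever $c^\phi_{\mathbf{x},\mathbf{y}}\neq 0$ the term $U^k\mathbf{y}$ lies in filtration level at most $A(\mathbf{x})$, i.e.\ $A(\mathbf{y})-k\le A(\mathbf{x})$. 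Hence
\[
\mathrm{gr}_t(\mathbf{y})-\mathrm{gr}_t(\mathbf{x})-d=\bigl(M(\mathbf{y})-M(\mathbf{x})-d\bigr)-t\bigl(A(\mathbf{y})-A(\mathbf{x})\bigr)=2k-t\bigl(A(\mathbf{y})-A(\mathbf{x})\bigr)\ \ge\ (2-t)k\ \ge\ 0,
\]
using $A(\mathbf{y})-A(\mathbf{x})\le k$ and $t\le 2$; in particular $\phi^t$ is homogeneous of $\mathrm{gr}_t$-degree $d$.

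Next I would record the functoriality. For homogeneous filtered maps $\phi\colon C\to D$ of degree $d$ and $\psi\colon D\to E$ of degree $e$, comparing coefficients over $\mathbb{F}[U]$ gives $c^{\psi\circ\phi}_{\mathbf{x},\mathbf{z}}=\sum_{\mathbf{y}}c^\phi_{\mathbf{x},\mathbf{y}}c^\psi_{\mathbf{y},\mathbf{z}}$ in $\mathbb{F}$, exactly as over $\mathbb{F}[U]$, while the relevant $v$-exponents telescope, $\bigl(\mathrm{gr}_t(\mathbf{y})-\mathrm{gr}_t(\mathbf{x})-d\bigr)+\bigl(\mathrm{gr}_t(\mathbf{z})-\mathrm{gr}_t(\mathbf{y})-e\bigr)=\mathrm{gr}_t(\mathbf{z})-\mathrm{gr}_t(\mathbf{x})-(d+e)$; hence $(\psi\circ\phi)^t=\psi^t\circ\phi^t$, $(\id_C)^t=\id_{C^t}$, and $(\partial^C)^t=\partial_t^C$. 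In particular a graded chain map $\phi$ yields a chain map $\phi^t$, because $\partial_t^D\circ\phi^t=(\partial^D\circ\phi)^t=(\phi\circ\partial^C)^t=\phi^t\circ\partial_t^C$. Finally, since $f$ is a filtered chain homotopy equivalence we may fix a graded filtered homotopy inverse $g\colon D\to C$ and graded filtered degree-$1$ maps $H\colon C\to C$, $K\colon D\to D$ with $\id_C-g\circ f=\partial^C\circ H+H\circ\partial^C$ and $\id_D-f\circ g=\partial^D\circ K+K\circ\partial^D$ (cf.\ Proposition \ref{prop:quasi-iso-homotopy}); applying $(-)^t$ and using the functoriality together with $(\partial)^t=\partial_t$ gives $\id_{C^t}-g^t\circ f^t=\partial_t^C\circ H^t+H^t\circ\partial_t^C$ and the analogous relation on $D^t$, so $f^t\colon C^t\to D^t$ is a chain homotopy equivalence with homotopy inverse $g^t$.

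I expect the only genuinely delicate point — the main obstacle — to be the well-definedness step above, now required simultaneously for $f$, $g$, $H$ and $K$: one must know that all the $v$-exponents $\mathrm{gr}_t(\mathbf{y})-\mathrm{gr}_t(\mathbf{x})-d$ that arise are $\ge 0$, and this uses in an essential way both that the maps in question are filtered and that $t\le 2$ (and, for $H$ and $K$, keeping the degree-$1$ shift straight). This is also precisely why one cannot simply invoke ``base change along $\mathbb{F}[U]\hookrightarrow\mathcal{R}$ is a functor'': the $t$-modified differential is not $\partial\otimes\id$. Everything after the well-definedness check is the routine coefficient bookkeeping already used to see that $f$ is a chain map, transported verbatim because the $\mathbb{F}$-coefficients are unchanged while the $v$-exponents telescope.
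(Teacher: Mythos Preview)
The paper does not supply its own proof of this proposition; it is quoted verbatim from \cite{Concordance-homomorphisms-from-knot-Floer-homology} as a black box. Your argument is correct and is in fact the standard one (and essentially what is done in the cited reference): extend $t$-modification to a functor on homogeneous filtered $\mathbb{F}[U]$-module maps, verify that the resulting $v$-exponents are nonnegative using the filtration hypothesis together with $t\le 2$, and then transport the four pieces of homotopy-equivalence data $(f,g,H,K)$ through the functor. Your identification of the well-definedness step as the only nontrivial point is exactly right, and your inequality $2k - t(A(\mathbf{y})-A(\mathbf{x}))\ge(2-t)k\ge 0$ handles it cleanly (when $A(\mathbf{y})-A(\mathbf{x})<0$ the bound is even easier). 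One small remark: the paper's stated definition of ``filtered chain homotopy equivalence'' is slightly informal, so your appeal to Proposition~\ref{prop:quasi-iso-homotopy} to guarantee that $g$, $H$, $K$ can themselves be taken filtered is the right move.
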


Let $C$ be a finitely generated chain complex over $\mathcal{R}$.
The \textit{dual complex} $C^*=\mathrm{Mor}_{\mathcal{R}}(C,\mathcal{R})$ of $C$ is defined as follows.
As a module, $C^*$ is the module of maps $\phi\colon C\to\mathcal{R}$ which commute with the $\mathcal{R}$-action, in other words, $\phi(r\cdot x)=r\cdot \phi(x)$ for $x\in C$ and $r\in\mathcal{R}$.
The differential $d$ of $C^*$ is determined by
\[
(d\phi)(x)=\phi(\partial x).
\]

Let $\mathrm{gr}(v^\alpha)=-\alpha$ and the degree of a morphism in $C^*$ be $m$ if it sends elements of $C$ of degree $n$ to those of degree $m+n$.
Then if $C$ is a graded chain complex over $\mathcal{R}$, we get the graded dual complex $C^*$.
This construction gives the usual cochain complex with $-1$ times its usual grading.

\begin{prop}[{\cite[Proposition 4.5]{Concordance-homomorphisms-from-knot-Floer-homology}}]
Let $C$ be a finitely generated, $\Z$-graded, $\Z$-filtered chain complex over $\mathbb{F}[U]$.
Then we have
\[
(C^*)^t\cong (C^t)^*,
\]
where $C^*=\mathrm{Hom}(C,\mathbb{F}[U])$.
\end{prop}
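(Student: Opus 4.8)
The plan is to fix a homogeneous $\mathbb{F}[U]$-basis $\mathbf{x}_1,\dots,\mathbf{x}_k$ of $C$ (such a basis exists since $C$ is finitely generated, and freeness over $\mathbb{F}[U]$ is implicit in writing the differential as in (\ref{dd})) and reduce the statement to a term-by-term comparison of two differentials. With respect to this basis write
\[
\partial\mathbf{x}_i=\sum_j c_{ij}\,U^{\frac{M(\mathbf{x}_j)-M(\mathbf{x}_i)+1}{2}}\mathbf{x}_j,\qquad c_{ij}\in\{0,1\}.
\]
The $\mathbb{F}[U]$-dual $C^*=\mathrm{Hom}_{\mathbb{F}[U]}(C,\mathbb{F}[U])$ is again finitely generated and free, with dual basis $\mathbf{x}_1^*,\dots,\mathbf{x}_k^*$, gradings $M(\mathbf{x}_i^*)=-M(\mathbf{x}_i)$ and $A(\mathbf{x}_i^*)=-A(\mathbf{x}_i)$; from $(d\phi)(x)=\phi(\partial x)$ one computes $d\mathbf{x}_i^*=\sum_j c_{ji}\,U^{\frac{M(\mathbf{x}_j^*)-M(\mathbf{x}_i^*)+1}{2}}\mathbf{x}_j^*$, which is well defined over $\mathbb{F}[U]$ and is again of the shape (\ref{dd}), with structure constants the transpose of those of $\partial$. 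Hence $t$-modification (Definition \ref{dfn:t-mod}) applies to $C^*$ as well.

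Next I would compute both sides in this basis. Applying Definition \ref{dfn:t-mod} to $C^*$ gives $(C^*)^t$ with $\mathcal{R}$-basis $\mathbf{x}_1^*,\dots,\mathbf{x}_k^*$, grading $\mathrm{gr}_t(\mathbf{x}_i^*)=M(\mathbf{x}_i^*)-tA(\mathbf{x}_i^*)=-\mathrm{gr}_t(\mathbf{x}_i)$, and, by (\ref{differential_formal}),
\[
\partial_t\mathbf{x}_i^*=\sum_j c_{ji}\,v^{\mathrm{gr}_t(\mathbf{x}_j^*)-\mathrm{gr}_t(\mathbf{x}_i^*)+1}\mathbf{x}_j^*=\sum_j c_{ji}\,v^{\mathrm{gr}_t(\mathbf{x}_i)-\mathrm{gr}_t(\mathbf{x}_j)+1}\mathbf{x}_j^*.
\]
On the other side, since $C$ is finitely generated and free, $C^t=C\otimes_{\mathbb{F}[U]}\mathcal{R}$ is finitely generated and free over $\mathcal{R}$, so $(C^t)^*=\mathrm{Mor}_{\mathcal{R}}(C^t,\mathcal{R})$ has dual basis $\mathbf{x}_1^\vee,\dots,\mathbf{x}_k^\vee$ with $\mathrm{gr}(\mathbf{x}_i^\vee)=-\mathrm{gr}_t(\mathbf{x}_i)$; combining $\partial_t\mathbf{x}_i=\sum_j c_{ij}\,v^{\mathrm{gr}_t(\mathbf{x}_j)-\mathrm{gr}_t(\mathbf{x}_i)+1}\mathbf{x}_j$ with $(d\phi)(x)=\phi(\partial_t x)$ yields
\[
d\mathbf{x}_i^\vee=\sum_j c_{ji}\,v^{\mathrm{gr}_t(\mathbf{x}_i)-\mathrm{gr}_t(\mathbf{x}_j)+1}\mathbf{x}_j^\vee.
\]
These two formulas agree, so the $\mathcal{R}$-linear extension of $\mathbf{x}_i^*\mapsto\mathbf{x}_i^\vee$ is a grading-preserving $\mathcal{R}$-module isomorphism intertwining $\partial_t$ and $d$, i.e.\ the desired isomorphism $(C^*)^t\cong(C^t)^*$.

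Most of this is bookkeeping: the dual gradings, the well-definedness of $d$ over $\mathbb{F}[U]$, and the two differential computations. I expect the genuine points of care to be two. First, the module-theoretic fact that for $C$ finitely generated free over $\mathbb{F}[U]$ one has $\mathrm{Hom}_{\mathbb{F}[U]}(C,\mathbb{F}[U])\otimes_{\mathbb{F}[U]}\mathcal{R}\cong\mathrm{Mor}_{\mathcal{R}}(C\otimes_{\mathbb{F}[U]}\mathcal{R},\mathcal{R})$ canonically and compatibly with the dual bases — this is exactly where finite generation enters, and it is the reason the hypothesis cannot be dropped. Second, and this I regard as the main (if modest) obstacle, verifying that the exponent of $v$ in $\partial_t$ on $(C^*)^t$ is \emph{literally} the exponent of $v$ in $d$ on $(C^t)^*$: this rests on the identity $\mathrm{gr}_t(\mathbf{x}_i^*)=-\mathrm{gr}_t(\mathbf{x}_i)$ together with the observation that passing to the dual both transposes the structure constants and reverses the roles of source and target in the difference $\mathrm{gr}_t(\mathbf{y})-\mathrm{gr}_t(\mathbf{x})$, so that the two sign reversals cancel and the $+1$ survives unchanged. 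Getting this cancellation exactly right is where a careless computation would go astray; once it is checked, the rest of the argument is formal.
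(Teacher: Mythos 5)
The paper states this as an imported result (citing Ozsv\'ath--Stipsicz--Szab\'o, Proposition 4.5) and gives no proof of its own, so there is nothing internal to compare against; your direct computation in a homogeneous $\mathbb{F}[U]$-basis is correct and is essentially the standard argument from the cited source. In particular you correctly identify the one point where care is needed --- that $\mathrm{gr}_t(\mathbf{x}_i^*)=-\mathrm{gr}_t(\mathbf{x}_i)$ together with the transposition of the structure constants makes the $v$-exponents on the two sides literally agree --- and the remaining bookkeeping (freeness, the dual-basis identification of $\mathrm{Hom}_{\mathbb{F}[U]}(C,\mathbb{F}[U])\otimes_{\mathbb{F}[U]}\mathcal{R}$ with $\mathrm{Mor}_{\mathcal{R}}(C^t,\mathcal{R})$) is handled appropriately.
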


\subsection{The Upsilon invariant in knot Floer homology}
For a knot $K\subset S^3$, the knot Floer complex $CFK^-(K)$ is a finitely generated, $\Z$-graded, $\Z$-filtered chain complex over $\mathbb{F}[U]$.
See \cite{Holomorphic-disks-and-knot-invariants} for details for knot Floer complexes.

\begin{dfn}[{\cite[Definition 3.4]{Concordance-homomorphisms-from-knot-Floer-homology}}]
For $t\in[0,2]$, \textit{the $t$-modified knot Floer complex} $tCFK(K)$ is the $t$-modified chain complex $(CFK^-(K))^t$.
\end{dfn}
By definition, the $t$-modified knot Floer complex $tCFK(K)$ is a finitely generated $\mathrm{gr}_t$-graded module over $\mathcal{R}$.

\begin{thm}[{\cite[Theorem 3.5]{Concordance-homomorphisms-from-knot-Floer-homology}}]
As an isomorphism class of $\mathrm{gr}_t$-graded module over $\mathcal{R}$, the homology of $tCFK(K)$ is a knot invariant.
\end{thm}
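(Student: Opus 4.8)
The plan is to deduce the statement (for each fixed $t\in[0,2]$) from two facts established elsewhere: that the knot Floer complex $CFK^-(K)$ is a knot invariant up to filtered chain homotopy equivalence over $\mathbb{F}[U]$, and Proposition \ref{prop:f-f^t}, which says that $t$-modification carries filtered chain homotopy equivalences to chain homotopy equivalences. Concretely, two Heegaard diagrams $\mathcal{H}$ and $\mathcal{H}'$ presenting $K$ yield finitely generated $\Z$-graded, $\Z$-filtered complexes $C=CFK^-(\mathcal{H})$ and $C'=CFK^-(\mathcal{H}')$ over $\mathbb{F}[U]$ together with a filtered chain homotopy equivalence $f\colon C\to C'$ (see \cite{Holomorphic-disks-and-knot-invariants}). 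Thus the theorem reduces to the purely algebraic assertion: if $f\colon C\to C'$ is a filtered chain homotopy equivalence of such complexes, then $H(C^t)$ and $H((C')^t)$ are isomorphic as $\mathrm{gr}_t$-graded modules over $\mathcal{R}$.

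First I would invoke Proposition \ref{prop:f-f^t} to produce a chain homotopy equivalence $f^t\colon C^t\to (C')^t$, together with a homotopy inverse $g^t$ and null-homotopies of $f^tg^t-\id$ and $g^tf^t-\id$. The second step is to observe that all of these maps respect the grading $\mathrm{gr}_t$ exactly, not merely up to a shift: each is obtained from the corresponding map between the $\mathbb{F}[U]$-complexes by the same recipe used in \eqref{differential_formal}, namely by replacing the powers of $U$ occurring in the structure constants with the powers of $v$ dictated by the $\mathrm{gr}_t$-gradings of the source and target generators, so that the resulting $\mathcal{R}$-linear map is by construction homogeneous of degree $0$ for $\mathrm{gr}_t$ (and the chain homotopies homogeneous of degree $+1$). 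Granting this, $f^t$ descends to an isomorphism $H(C^t)\cong H((C')^t)$ of $\mathrm{gr}_t$-graded $\mathcal{R}$-modules, which completes the reduction.

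There is one loose end to tie up: the complex $C^t$ of Definition \ref{dfn:t-mod} is built from a choice of free homogeneous generating set of $C$ over $\mathbb{F}[U]$, so one should check it is independent of that choice. But this is the special case of the previous paragraph in which $f$ is the identity of $C$: applying Proposition \ref{prop:f-f^t} to $\id_C\colon C\to C$, with the source and the target equipped with two different generating sets, gives a $\mathrm{gr}_t$-graded chain homotopy equivalence between the two resulting versions of $C^t$, hence an isomorphism of their homologies. Therefore $H(tCFK(K))$ is well defined, and by the reduction above it depends only on $K$.

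I expect the only real work to lie in the bookkeeping of the second step: one must verify carefully that the map $f^t$ furnished by Proposition \ref{prop:f-f^t} is genuinely $\mathrm{gr}_t$-grading preserving, and likewise for the null-homotopies, so that the induced map on homology is an honest isomorphism of $\mathrm{gr}_t$-graded $\mathcal{R}$-modules rather than merely a graded isomorphism up to an overall shift. No geometric or analytic input beyond the invariance of $CFK^-(K)$ enters.
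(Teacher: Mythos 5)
Your proposal is correct and follows exactly the paper's route: the paper also deduces the theorem by combining Proposition \ref{prop:f-f^t} with the fact that the filtered chain homotopy type of $CFK^-(K)$ is a knot invariant. The extra care you take about $\mathrm{gr}_t$-homogeneity of $f^t$ and independence of the chosen generating set is sensible bookkeeping that the paper leaves implicit (it is absorbed into the statement of Proposition \ref{prop:f-f^t}).
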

The proof is quickly given by combining Proposition \ref{prop:f-f^t} and the fact that the filtered chain homotopy type of $CFK^-$ is a knot invariant.

\begin{dfn}[{\cite[Definition 3.6]{Concordance-homomorphisms-from-knot-Floer-homology}}]
For a knot $K\subset S^3$ and $t\in[0,2]$, the \textit{Upsilon invariant} $\Upsilon_K(t)\colon[0,2]\to\R$ is defined by
\[
\Upsilon_K(t):=\mathrm{max}\{\mathrm{gr}_t(x)|x\in H(tCFK(K)),x\textrm{ is homogeneous, non-torsion}\},
\]
where "non-torsion" means that it is non-torsion as an element of $\mathcal{R}$-module.
\end{dfn}

We write some properties of the Upsilon invariant \cite{Concordance-homomorphisms-from-knot-Floer-homology}.

\begin{enumerate}[(1)]
    \item $\Upsilon_K(t)=\Upsilon_K(2-t)$,
    \item $\Upsilon_K(0)=\Upsilon_K(2)=0$,
    \item $\Upsilon_{K_1\#K_2}(t)=\Upsilon_{K_1}(t)+\Upsilon_{K_2}(t)$,
    \item $\Upsilon_{m(K)}(t)=-\Upsilon_K(t)$, where $m(K)$ is the mirror of $K$,
    \item $\Upsilon_{K_+}(t)\leq \Upsilon_{K_-}(t)\leq \Upsilon_{K_+}(t)+t$ for $0\leq t \leq 1$, where $K_+$ and $K_-$ are two knots which differ in a crossing change,
    \item $|\Upsilon_K(t)|\leq t\cdot g_s(K)$, where $g_s(K)$ is the slice genus of $K$.
    \item For a sufficient small $t>0$, the slope of $\Upsilon_K(t)$ equals $-\tau(K)$.
\end{enumerate}
The properties (3) and (6) imply that the Upsilon invariant is a smooth knot concordance invariant.

\subsection{Grid homology for knots}
In this paper, we only briefly describe grid chain complexes because it is sufficient to prove our main theorem.

A grid diagram $\G$ is an $n\times n$ grid of squares some of which are decorated with an $X$- or $O$-marking such that each row and column contains exactly one $X$- and one $O$-marking.
For an $n\times n$ grid diagram $\G$, the \textit{filtered grid chain complex} $\mathcal{GC}^-(\G)$ is the finitely generated, $\Z$-graded, $\Z$-filtered chain complex over $\mathbb{F}[U_1,\dots,U_n]$, where $\mathbb{F}=\mathbb{Z}/2\mathbb{Z}$.
The minus version of \textit{grid homology} of $\G$ is the homology of the associated graded object of $\mathcal{GC}^-(\G)$, regarded as an $\mathbb{F}[U]$-module where the action of $U$ is induced by multiplication by $U_1$.
Let $\widehat{\mathcal{GC}}(\G)=\mathcal{GC}^-(\G)/U_1$ be the quotient complex.
The hat version of grid homology $\widehat{GH}(\G)$ is the homology of the associated graded object of $\widehat{\mathcal{GC}}(\G)$, regarded as an $\mathbb{F}$-vector space.

\begin{thm}[{\cite[Theorem 3.3]{A-combinatorial-description-of-knot-Floer-homology}}]
\label{thm:grid=HFK}
If $\G$ is a grid diagram representing a knot $K$, then the filtered chain homotopy type of $\mathcal{GC}^-(\G)$ and $CFK^-(K)$ are the same, where $\mathcal{GC}^-(\G)$ is regarded as a $\mathbb{Z}$-filtered complex over $\mathbb{F}[U]$ and the action of $U$ is defined to be multiplication by $U_1$.
\end{thm}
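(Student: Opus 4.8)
The plan is to recognize $\G$ as a multiply-pointed Heegaard diagram for the pair $(S^3,K)$ on the torus, to identify the combinatorial complex $\mathcal{GC}^-(\G)$ with the holomorphic knot Floer complex of that diagram for a conveniently chosen almost-complex structure, and then to use invariance under Heegaard moves to pass to a standard doubly-pointed diagram and recover $CFK^-(K)$.

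First I would build the Heegaard diagram. Identifying opposite sides of the square grid underlying $\G$ produces a torus $T$; the $n$ horizontal circles separating the rows form an $n$-tuple $\alpha$ of pairwise disjoint curves, the $n$ vertical circles separating the columns form an $n$-tuple $\beta$, and the $O$- and $X$-markings become two $n$-tuples of basepoints $\mathbb{O}$ and $\mathbb{X}$. One checks that $(T,\alpha,\beta,\mathbb{O})$ is a genus-one Heegaard diagram for $S^3$ with basepoints $\mathbb{O}$, and that the horizontal arcs joining each $O$ to the $X$ in its row, together with the vertical arcs joining each $X$ to the $O$ in its column, trace out the knot $K$; thus $(T,\alpha,\beta,\mathbb{O},\mathbb{X})$ is a Heegaard diagram compatible with $K\subset S^3$. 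Generators of the associated minus complex over $\mathbb{F}[U_1,\dots,U_n]$ are $n$-tuples of intersection points using each $\alpha$- and each $\beta$-curve once --- exactly the generators of $\mathcal{GC}^-(\G)$ --- and the Maslov and Alexander gradings agree with their combinatorial definitions.

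Second, and this is the analytic core, I would show that for the split (nearly symmetric) complex structure on $\mathrm{Sym}^n(T)$, or a small generic perturbation of it, the index-one holomorphic disks entering the Heegaard Floer differential are precisely the empty rectangles entering the grid differential, each contributing a single point to the $\R$-translation-reduced moduli space. This has three ingredients: (i) a Maslov-index computation showing that a positive domain of index one connecting two generators, with local multiplicities at the basepoints recorded by a monomial $U_1^{a_1}\cdots U_n^{a_n}$, must be an embedded rectangle disjoint from the remaining generators; (ii) a Riemann-mapping-type argument identifying the moduli space of holomorphic representatives of such a rectangle with a point; and (iii) arranging admissibility of the diagram and transversality of the moduli spaces so that these counts genuinely compute the differential. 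Together these identify $\mathcal{GC}^-(\G)$, as a $\Z$-filtered complex over $\mathbb{F}[U_1,\dots,U_n]$, with the minus knot Floer complex of $(T,\alpha,\beta,\mathbb{O},\mathbb{X})$.

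Third I would invoke the invariance machinery. Any two multiply-pointed Heegaard diagrams compatible with $K\subset S^3$ are connected by a finite sequence of Heegaard moves --- isotopies, handleslides, ordinary (de)stabilizations, and basepoint (de)stabilizations that add or delete a pair $(O_i,X_i)$ together with its two new curves --- and each induces a filtered chain homotopy equivalence on the minus complexes viewed over $\mathbb{F}[U]$. Reducing the $n$-pointed grid diagram to a standard doubly-pointed diagram for $K$, and observing that the basepoint destabilizations identify the actions of $U_1,\dots,U_n$ up to chain homotopy, yields a filtered chain homotopy equivalence $\mathcal{GC}^-(\G)\simeq CFK^-(K)$ over $\mathbb{F}[U]$ with $U$ acting as multiplication by $U_1$. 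The main obstacle is the second step: the index computation forcing domains to be rectangles, the identification of rectangle moduli spaces with points, and the attendant transversality and admissibility issues are where the actual analysis lies, whereas the first step is bookkeeping and the third is an appeal to the standard invariance theorems for knot Floer homology.
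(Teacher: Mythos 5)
This statement is not proved in the paper: it is quoted verbatim from Manolescu--Ozsv\'ath--Sarkar's ``A combinatorial description of knot Floer homology'' and used as a black box. Your outline --- viewing the grid as a multi-pointed genus-one Heegaard diagram, showing that index-one positive domains for a split complex structure are exactly the empty rectangles each contributing one point mod $2$, and then invoking invariance under Heegaard moves and basepoint (de)stabilization to reach $CFK^-(K)$ with $U=U_1$ --- is precisely the argument of that cited reference (together with the standard filtered invariance results), so it matches the source's approach; just note that the analytic step (ii)--(iii) and the filtered, rather than associated-graded, version of the invariance statements are where the real work cited to the literature lives.
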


\begin{thm}[{\cite[Theorem 1.6]{Grid-diagrams-and-Manolescu's-unoriented-skein-exact-triangle-for-knot-Floer-homology},\cite[Theorem 10.3.1 and Corollary 10.3.2]{Grid-homology-for-knots-and-links}}]
\label{thm:quasi-GH-determined}
If $K$ is an alternating knot, then its grid homologies $\widehat{GH}(K)$ and $GH^-(K)$ are completely determined by the Alexander polynomial $\Delta_K(t)$ and the signature $\sigma(K)$.
In particular, $\widehat{GH}_d(K,s)\neq 0$ if $d-s=\frac{\sigma(K)}{2}$. 
\end{thm}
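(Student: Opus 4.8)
The plan is to transfer the computation to knot Floer homology via Theorem~\ref{thm:grid=HFK}, where it is classical, and then transport the answer back; I will also indicate where the work lies in the alternative argument that stays inside grid homology, which is the route of the cited references. Concretely, Theorem~\ref{thm:grid=HFK} says $\mathcal{GC}^-(\G)$ and $CFK^-(K)$ share a filtered chain homotopy type over $\mathbb{F}[U]$ ($U$ acting as $U_1$). By Proposition~\ref{prop:quasi-iso-homotopy} such an equivalence is a filtered quasi-isomorphism, so the induced map on associated graded objects is a quasi-isomorphism; being $\mathbb{F}[U]$-linear it also descends to the quotients by $U$, where the same holds. Taking homology therefore yields bigraded isomorphisms $GH^-(K)\cong HFK^-(K)$ and $\widehat{GH}(K)\cong\widehat{HFK}(K)$. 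Then I would invoke Ozsv\'{a}th-Szab\'{o}'s theorem on alternating knots: if $\Delta_K(t)=\sum_s a_s t^s$ is the symmetrized Alexander polynomial, then $\widehat{HFK}(K)$ is \emph{thin}, i.e. $\widehat{HFK}_d(K,s)=0$ unless $d-s=\tfrac{\sigma(K)}{2}$, with $\dim_{\mathbb{F}}\widehat{HFK}_d(K,s)=|a_s|$ on that diagonal; consequently $HFK^-(K)$ is forced into the standard form of one free $\mathbb{F}[U]$-tower plus finitely many $U$-torsion summands, with all bigradings and multiplicities dictated by $\Delta_K$ and $\sigma(K)$. Transporting back along the isomorphisms above proves the theorem, the ``in particular'' clause being precisely that $\widehat{GH}(K)$ is supported on, and nonzero along (as $\Delta_K\neq0$), the diagonal $d-s=\sigma(K)/2$.

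A proof internal to grid homology, as in \cite{Grid-diagrams-and-Manolescu's-unoriented-skein-exact-triangle-for-knot-Floer-homology,Grid-homology-for-knots-and-links}, would run through four steps: (i) the graded Euler characteristic of $GH^-(\G)$, resp. $\widehat{GH}(\G)$, recovers $\Delta_K$ up to normalization; (ii) $\widehat{GH}(\G)$ is thin, proved by induction on Manolescu's unoriented skein exact triangle in grid homology with the unknot as base case; (iii) the supporting diagonal is identified with $\delta=\sigma(K)/2$ by tracking how $\delta$ and $\sigma$ both change under a skein move (equivalently, via $\tau(K)=-\sigma(K)/2$, which is grid-computable); (iv) one upgrades the $\widehat{GH}$ computation to $GH^-$ by the same ``thin implies standard form'' argument as above.

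I expect step (ii) --- thinness --- to be the main obstacle: single-diagonal support is not a formal consequence of the definitions, and its known proofs go through the unoriented skein exact triangle, which is precisely what uses the hypothesis ``alternating'' (or ``quasi-alternating''); by contrast the Euler-characteristic count in (i), the diagonal identification in (iii), and the reduction in (iv) are mechanical once thinness is available. If one instead takes the shortcut through Theorem~\ref{thm:grid=HFK}, even thinness comes for free, and the only remaining point needing care is that the Maslov and Alexander gradings on $\mathcal{GC}^-(\G)$ match those on $CFK^-(K)$ under that equivalence, so that ``$d-s=\sigma(K)/2$'' names the same diagonal on both sides.
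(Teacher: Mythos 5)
Your proposal is correct, but your primary route is not the one the paper (via its citations) relies on, and the difference matters for this paper's purpose. The paper does not prove this theorem itself; it cites Wong's combinatorial unoriented skein exact triangle to establish that $\widehat{GH}(K)$ is thin for (quasi-)alternating $K$, and then the algebraic ``thin implies standard form'' argument of \cite[Theorem 10.3.1 and Corollary 10.3.2]{Grid-homology-for-knots-and-links} to pin down $\widehat{GH}$ and $GH^-$ from $\Delta_K$ and $\sigma(K)$ --- exactly the four-step internal argument you sketch as your alternative, with step (ii) correctly identified as the crux. Your main route, transporting the classical Ozsv\'ath--Szab\'o computation of $\widehat{HFK}$ of alternating knots across Theorem~\ref{thm:grid=HFK}, is mathematically sound (and your care about matching the Maslov/Alexander bigradings under that equivalence is the right thing to worry about), but it imports a holomorphic-theoretic input. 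The paper explicitly leans on the fact that Theorem~\ref{thm:quasi-GH-determined} is ``combinatorially proved'' when it uses it in the proof of Theorem~\ref{thm:quasi-alt}, since the whole point there is to establish the Upsilon computation for alternating knots within the framework of grid homology; your shortcut would sacrifice that, so the internal route should be regarded as the primary proof rather than the fallback. One small caveat on the ``in particular'' clause: nonvanishing of $\widehat{GH}_d(K,s)$ along the diagonal holds because the coefficients of the Alexander polynomial of an alternating knot are nonzero and alternate in sign throughout its span (not merely because $\Delta_K\neq 0$), and of course only for $|s|$ at most the genus.
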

We remark that Wong \cite{Grid-diagrams-and-Manolescu's-unoriented-skein-exact-triangle-for-knot-Floer-homology} gives a combinatorial proof of the unoriented skein exact triangle and showed that $\widehat{GH}(K)$ is homologically thin.
Then the above theorem can be proved algebraically \cite[Theorem 10.3.1 and Corollary 10.3.2]{Grid-homology-for-knots-and-links}.

\subsection{The Upsilon invariant in grid homology}
\label{sec:grid-homology-for-knots}
This section provides the grid Upsilon invariant.
The grid Upsilon invariant is first defined by F\"{o}ldv\'{a}ri \cite{The-knot-invariant-Upsilon-using-grid-homologies}, and reformulated by the author \cite{Concordance-invariant-Upsilon-for-balanced-spatial-graphs-using-grid-homology}.

The \textit{$t$-modified grid complex} $tGC^-(\G)$ and the grid Upsilon invariant were first defined by F\"{o}ldv\'{a}ri \cite{The-knot-invariant-Upsilon-using-grid-homologies} for $t\in[0,2]\cap\mathbb{Q}$, by giving them directly and concretely, in other words, without $\mathcal{GC}^-(\G)$.
Later, the author reformulated them for the whole $t\in[0,2]$ as follows.

\begin{dfn}
\label{dfn:t-grid}
For an $n\times n$ grid diagram $\G$, the quotient complex $\frac{\mathcal{GC}^-(\G)}{U_1=\dots=U_n}$ is a finitely generated $\mathbb{Z}$-graded, $\mathbb{Z}$-filtered chain complex over $\mathbb{F}[U]$.
For $t\in[0,2]$, the \textit{t-modified grid complex} of $\G$ is defined by $tGC^-(\G)=\left(\frac{\mathcal{GC}^-(g)}{U_1=\dots=U_n}\right)^t$ (Definition \ref{dfn:t-mod}).
\end{dfn}

\begin{prop}[{\cite[Proposition 4.3]{Concordance-invariant-Upsilon-for-balanced-spatial-graphs-using-grid-homology}}]
\label{U=t}
If $t\in[0,2]\cap\mathbb{Q}$, the complex $\left(\frac{\mathcal{GC}^-(g)}{U_1=\dots=U_n}\right)^t$ of Definition \ref{dfn:t-grid} is isomorphic to F\"{o}ldv\'{a}ri's t-modified grid complex $tGC^-(\G)$ as graded chain complexes.
\end{prop}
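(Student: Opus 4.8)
The plan is to produce the isomorphism ``on the nose''. Both complexes are built on the same generating set --- the grid states $\mathbf x\in S(\G)$ --- so I would check that the identity map on grid states extends to an $\mathcal R$-linear isomorphism which respects the $\mathrm{gr}_t$-grading and intertwines the two differentials. Thus the three things to verify, all phrased in terms of grid states, are: the underlying graded modules agree, the gradings $\mathrm{gr}_t$ agree, and the differentials agree.

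First I would put the differential of $\tfrac{\mathcal{GC}^-(\G)}{U_1=\dots=U_n}$ into the normal form required by Definition \ref{dfn:t-mod}. The differential of the filtered grid complex sends $\mathbf x$ to $\sum_{\mathbf y}\sum_{r\in\mathrm{Rect}^\circ(\mathbf x,\mathbf y)}U_1^{O_1(r)}\cdots U_n^{O_n(r)}\,\mathbf y$, so after the identification $U_1=\dots=U_n=U$ the coefficient of $\mathbf y$ in $\partial\mathbf x$ is $\sum_{r\in\mathrm{Rect}^\circ(\mathbf x,\mathbf y)}U^{O(r)}$, where $O(r)=\#(r\cap\mathbb O)$. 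The key observation is that this exponent is forced by the endpoints: the grading-change formulas for grid states give, for \emph{every} $r\in\mathrm{Rect}^\circ(\mathbf x,\mathbf y)$,
\[
M(\mathbf x)-M(\mathbf y)=1-2\,O(r),\qquad A(\mathbf x)-A(\mathbf y)=X(r)-O(r),
\]
where $X(r)=\#(r\cap\mathbb X)$. Hence $O(r)=\tfrac12\bigl(M(\mathbf y)-M(\mathbf x)+1\bigr)$, and likewise $X(r)$, are independent of the choice of $r$, so the coefficient of $\mathbf y$ equals $c_{\mathbf x,\mathbf y}\,U^{(M(\mathbf y)-M(\mathbf x)+1)/2}$ with $c_{\mathbf x,\mathbf y}=\#\mathrm{Rect}^\circ(\mathbf x,\mathbf y)\bmod 2\in\{0,1\}$. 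This is exactly the form (\ref{dd}), so Definition \ref{dfn:t-mod} applies and it identifies the structure constants $c_{\mathbf x,\mathbf y}$ with mod-$2$ counts of empty rectangles.

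Next I would feed these $c_{\mathbf x,\mathbf y}$ into the $t$-modified differential (\ref{differential_formal}). Taking $\mathrm{gr}_t(\mathbf x)=M(\mathbf x)-tA(\mathbf x)$ on a grid state, the two grading-change formulas give at once
\[
\mathrm{gr}_t(\mathbf y)-\mathrm{gr}_t(\mathbf x)+1=(2-t)\,O(r)+t\,X(r)\ \ (\geq 0)
\]
for any $r\in\mathrm{Rect}^\circ(\mathbf x,\mathbf y)$, so, using that all rectangles between a fixed pair carry the same power of $v$ and that $\#\mathrm{Rect}^\circ(\mathbf x,\mathbf y)$ reduces mod $2$ to $c_{\mathbf x,\mathbf y}$, the differential of $\bigl(\tfrac{\mathcal{GC}^-(\G)}{U_1=\dots=U_n}\bigr)^t$ is
\[
\partial_t(\mathbf x)=\sum_{\mathbf y}\ \sum_{r\in\mathrm{Rect}^\circ(\mathbf x,\mathbf y)}v^{(2-t)O(r)+t\,X(r)}\,\mathbf y ,
\]
with $\mathrm{gr}_t$-grading $M-tA$ on grid states. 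Finally I would line this up with F\"oldv\'ari's \cite{The-knot-invariant-Upsilon-using-grid-homologies} definition of $tGC^-(\G)$ for rational $t$: her complex is freely generated by $S(\G)$ over her rational-$t$ ground ring (for $t=p/q$ this ring is generated by $v^{1/q}$ and embeds in $\mathcal R$), is graded by $M-tA$ on grid states, and has differential weighting each $r\in\mathrm{Rect}^\circ(\mathbf x,\mathbf y)$ by the same power $v^{(2-t)O(r)+t\,X(r)}$; granting that her $M,A$ on grid states are the standard ones, the identity on $S(\G)$ is the desired isomorphism of $\mathrm{gr}_t$-graded chain complexes over $\mathcal R$.

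I expect the only real work to be bookkeeping: carefully matching F\"oldv\'ari's explicit conventions for rational $t$ --- which ground ring she uses, any global grading shift hidden in her normalization of the Alexander and Maslov gradings, and the precise form in which she records the weight of a rectangle --- against the intrinsic description forced by Definition \ref{dfn:t-mod}. There is no conceptual obstacle, since the substantive point is simply that both the rectangle weights and the grading depend only on the endpoints $\mathbf x,\mathbf y$ of the rectangle, which reduces the comparison to matching formulas.
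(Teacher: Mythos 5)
The paper gives no proof of this proposition here---it is imported verbatim from the author's earlier paper---but your direct verification is exactly the intended argument: both complexes are free on the grid states, and the endpoint formulas $M(\mathbf x)-M(\mathbf y)=1-2O(r)$ and $A(\mathbf x)-A(\mathbf y)=X(r)-O(r)$ for empty rectangles show that the differential forced by Definition \ref{dfn:t-mod} is $\sum_{\mathbf y}\sum_{r\in\mathrm{Rect}^\circ(\mathbf x,\mathbf y)}v^{(2-t)O(r)+tX(r)}\,\mathbf y$ with grading $M-tA$, which is F\"oldv\'ari's definition. The only bookkeeping item that genuinely needs attention is the one you flag last: for $t=p/q$ F\"oldv\'ari's ground ring is generated by $v^{1/q}$ rather than being all of $\mathcal R$, so identifying the underlying modules requires the evident base change (which is how the cited proposition resolves it), not merely the embedding of rings you mention.
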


\begin{dfn}
Let $\G$ be a grid diagram representing a knot $K\subset S^3$.
The \textbf{grid Upsilon invariant} $\Upsilon_\G(t)\colon[0,2]\to\R$ is defined by for $t\in[0,1]$,
\[
\Upsilon_\G(t):=\mathrm{max}\{\mathrm{gr}_t(x)|x\in H(tGC^{-}(\G)),x\textrm{ is homogeneous, non-torsion}\},
\]
where “non-torsion" means that it is non-torsion as an element of $\mathcal{R}$-module.
For $t\in(1,2]$, let $\Upsilon_\G(t)=\Upsilon_\G(2-t)$.
\end{dfn}

For a technical reason, the gird Upsilon invariant at $t\in(1,2]$ is not defined directly.
The reason is related to the fact that the homology of $tGC^-(\G)$ depends on the size $n$ of the grid diagram $\G$.

\begin{thm}[{\cite[Corollary 5.3]{The-knot-invariant-Upsilon-using-grid-homologies}}]
Let $\G$ and $\G'$ be two grid diagrams representing the same knot.
Then we have $\Upsilon_\G(t)=\Upsilon_{\G'}(t)$ for $t\in[0,2]$, in other words, the grid Upsilon invariant is a knot invariant.
\end{thm}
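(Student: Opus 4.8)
The plan is to follow the standard template for proving invariance of grid-theoretic invariants: by Cromwell's theorem (see, e.g., \cite{Grid-homology-for-knots-and-links}), any two grid diagrams $\G$ and $\G'$ representing the same knot are connected by a finite sequence of commutations and (de)stabilizations, so it suffices to treat a single grid move. Moreover, since the relation $\Upsilon_\G(t) = \Upsilon_\G(2-t)$ is built into the definition, we may restrict to $t \in [0,1]$. In each case the mechanism is the same: first produce a filtered chain homotopy equivalence between the quotient complexes $\frac{\mathcal{GC}^-(\G)}{U_1 = \dots = U_n}$ and $\frac{\mathcal{GC}^-(\G')}{U_1 = \dots = U_{n'}}$ over $\mathbb{F}[U]$ that respects the $\Z$-grading and $\Z$-filtration; then apply Proposition \ref{prop:f-f^t} to obtain a chain homotopy equivalence of the $t$-modified complexes over $\mathcal{R}$ respecting the $\mathrm{gr}_t$-grading; and finally note that such an equivalence induces a $\mathrm{gr}_t$-graded isomorphism on homology, hence preserves the maximal $\mathrm{gr}_t$-grading of a homogeneous, non-torsion class, i.e. preserves $\Upsilon$. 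One cannot simply invoke Theorem \ref{thm:grid=HFK} here, since the quotient $\frac{\mathcal{GC}^-(\G)}{U_1=\dots=U_n}$ is not determined by the filtered $\mathbb{F}[U_1]$-chain homotopy type of $\mathcal{GC}^-(\G)$ alone.

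For a commutation move the grid size is unchanged, $n = n'$, and the usual filtered chain homotopy equivalence $\mathcal{GC}^-(\G) \to \mathcal{GC}^-(\G')$ from grid homology identifies the $O$-markings of $\G$ with those of $\G'$; hence it is compatible with the relations $U_1 = \dots = U_n$ and descends to a grading- and filtration-preserving filtered chain homotopy equivalence of the two quotient complexes over $\mathbb{F}[U]$. The general mechanism above then yields $\Upsilon_\G(t) = \Upsilon_{\G'}(t)$ immediately. The only thing to verify is that the explicit polygon-counting homotopies behave well under setting all the $U_i$ equal, which is routine.

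The substantive case is a stabilization, say $\G'$ of size $n+1$ obtained from $\G$ of size $n$; after interpolating commutations we may assume the stabilization is of a single standard type. Here one starts from the grid-homology stabilization result, which identifies $\mathcal{GC}^-(\G')$ up to filtered chain homotopy equivalence with a mapping cone built from $\mathcal{GC}^-(\G)$ and the new variable $U_{n+1}$, and the task is to lift this through the quotient $U_1 = \dots = U_{n+1}$ and through $t$-modification while tracking the $\mathrm{gr}_t$-grading. The expected outcome is a relation of the form $tGC^-(\G') \simeq tGC^-(\G)\otimes_{\mathcal{R}} W$ (or a mapping cone of such), where $W$ is a small explicit $\mathcal{R}$-complex whose homology has its non-torsion part concentrated in $\mathrm{gr}_t$-grading $0$; then the maximal non-torsion $\mathrm{gr}_t$-grading is additive and hence unchanged. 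This is precisely the mechanism behind the remark that $H(tGC^-(\G))$ depends on the grid size $n$ while $\Upsilon_\G(t)$ does not, and it is the main obstacle of the proof: one must pin down $W$ — equivalently, locate exactly where stabilization introduces new homology in the $t$-modified quotient complex — and check that the new classes never lie strictly above the top non-torsion grading coming from $\G$. Granting this, the general mechanism concludes the case $t \in [0,1]$, and the case $t \in (1,2]$ follows by definition.
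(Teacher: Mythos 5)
The statement is quoted in the paper from F\"{o}ldv\'{a}ri's work and not reproved there, but the machinery the paper assembles (Lemma \ref{lem:u1=un}, Proposition \ref{prop:f-f^t}) is exactly what your outline is missing. Your overall strategy --- Cromwell's theorem, reduce to commutation and stabilization, push each equivalence through $t$-modification via Proposition \ref{prop:f-f^t}, and observe that a $\mathrm{gr}_t$-graded homotopy equivalence preserves the maximal non-torsion grading --- is sound and is essentially the original combinatorial route. However, as written there is a genuine gap precisely where you flag ``the main obstacle'': the stabilization case is never carried out. You posit $tGC^-(\G')\simeq tGC^-(\G)\otimes_{\mathcal{R}}W$ for ``a small explicit $\mathcal{R}$-complex $W$'' and then write ``granting this''; but identifying $W$ and verifying that its non-torsion part has top $\mathrm{gr}_t$-grading $0$ (rather than something positive) is the entire content of that case. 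Concretely, the stabilization theorem presents $\mathcal{GC}^-(\G')$, up to filtered chain homotopy equivalence and a shift, as the mapping cone of multiplication by $U_{n+1}-U_j$ on $\mathcal{GC}^-(\G)[V_{n+1}]$; after imposing $U_1=\dots=U_{n+1}$ this map becomes zero and the cone degenerates to $\frac{\mathcal{GC}^-(\G)}{U_1=\dots=U_n}\otimes\mathcal{W}$, where $\mathcal{W}$ has generators in (grading, filtration) $(0,0)$ and $(-1,-1)$. This is exactly the content of Lemma \ref{lem:u1=un} (i.e.\ \cite[Lemma 14.1.11]{Grid-homology-for-knots-and-links}), and without it, or an equivalent direct computation, your proof is incomplete. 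Once it is in place, $W=W_t\cong\mathcal{R}_0\oplus\mathcal{R}_{-1+t}$ is free over $\mathcal{R}$ with maximal grading $0\geq -1+t$ for $t\in[0,1]$, so tensoring does not change $\Upsilon$.

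Two further remarks. First, with Lemma \ref{lem:u1=un} in hand the move-by-move analysis of the quotient complexes can be bypassed entirely: the filtered chain homotopy type of $\mathcal{GC}^-(\G)$ over $\mathbb{F}[U]=\mathbb{F}[U_1]$ is already known (combinatorially) to be a knot invariant, and Lemma \ref{lem:u1=un} converts this into $\frac{\mathcal{GC}^-(\G)}{U_1=\dots=U_n}\simeq \mathcal{C}_K\otimes\mathcal{W}^{\otimes(n-1)}$ for an invariant complex $\mathcal{C}_K$; Proposition \ref{prop:f-f^t} and the grading observation then finish the proof in a few lines, with the only residual dependence on $\G$ being the exponent $n-1$, which is harmless for $t\in[0,1]$. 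This is the argument the paper runs (with $\mathcal{C}_K=CFK^-(K)$ via Theorem \ref{thm:grid=HFK}) to prove Theorem \ref{thm:Upsilon}. Second, your assertion that ``one cannot simply invoke Theorem \ref{thm:grid=HFK}'' is too pessimistic for the same reason: the quotient complex \emph{is} determined, up to filtered chain homotopy equivalence over $\mathbb{F}[U]$, by the filtered $\mathbb{F}[U_1]$-homotopy type of $\mathcal{GC}^-(\G)$ together with the grid size $n$.
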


\begin{thm}[{\cite[Theorem 1.3]{Concordance-invariant-Upsilon-for-balanced-spatial-graphs-using-grid-homology}}]
The grid Upsilon invariant is a smooth concordance invariant of knots.
\end{thm}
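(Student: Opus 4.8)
The plan is to derive this statement as an immediate corollary of Theorem~\ref{thm:Upsilon}, rather than re-running the cobordism argument of \cite{Concordance-invariant-Upsilon-for-balanced-spatial-graphs-using-grid-homology}. Let $\G_0$ and $\G_1$ be grid diagrams representing knots $K_0$ and $K_1$ that are smoothly concordant. By Theorem~\ref{thm:Upsilon} we have $\Upsilon_{\G_i}(t)=\Upsilon_{K_i}(t)$ for $i=0,1$ and all $t\in[0,2]$, so everything reduces to the fact that the original Upsilon invariant satisfies $\Upsilon_{K_0}(t)=\Upsilon_{K_1}(t)$. This is already recorded in the list of properties of $\Upsilon_K$ above, but I would include the short derivation for completeness.

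The derivation I have in mind runs as follows. Since $K_0$ and $K_1$ are smoothly concordant, the knot $J:=K_0\# m(K_1)$ bounds a smoothly embedded disk in $B^4$, so $g_s(J)=0$; property (6) then forces $|\Upsilon_J(t)|\le t\cdot g_s(J)=0$, i.e.\ $\Upsilon_J\equiv 0$. Properties (3) and (4) give $\Upsilon_J(t)=\Upsilon_{K_0}(t)+\Upsilon_{m(K_1)}(t)=\Upsilon_{K_0}(t)-\Upsilon_{K_1}(t)$, whence $\Upsilon_{K_0}(t)=\Upsilon_{K_1}(t)$ and therefore $\Upsilon_{\G_0}(t)=\Upsilon_{\G_1}(t)$. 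Combined with the already-cited invariance of $\Upsilon_\G$ under changing the grid diagram, this shows that $\Upsilon_\G$ is a smooth concordance invariant of knots.

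There is essentially no obstacle in this route; the only point I would be careful about is circularity. The proof of Theorem~\ref{thm:Upsilon} compares $tCFK(K)$ and $tGC^-(\G)$ through the filtered chain homotopy equivalence $CFK^-(K)\simeq\mathcal{GC}^-(\G)$ (Theorem~\ref{thm:grid=HFK}) together with Proposition~\ref{prop:f-f^t}, and at no stage does it invoke concordance invariance of either invariant, so appealing to it here is legitimate. I would also note in a remark that a self-contained proof inside grid homology is available — tracking the behavior of $tGC^-(\G)$ under the grid cobordism maps, exactly as in \cite{Concordance-invariant-Upsilon-for-balanced-spatial-graphs-using-grid-homology} — but that the argument above makes the statement automatic once the main theorem has been established, which is why I would present it this way.
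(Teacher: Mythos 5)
Your argument is logically sound and, as far as this paper's internal structure goes, non-circular: the proof of Theorem~\ref{thm:Upsilon} runs through Theorem~\ref{thm:grid=HFK} and Proposition~\ref{prop:f-f^t} and nowhere invokes concordance invariance, so deducing $\Upsilon_{\G_0}\equiv\Upsilon_{\G_1}$ from $\Upsilon_{\G_i}\equiv\Upsilon_{K_i}$ together with the slice-genus bound (6) and the additivity/mirror properties (3), (4) of the original invariant is legitimate. However, this is a genuinely different route from the one the paper points to. The statement is quoted from \cite{Concordance-invariant-Upsilon-for-balanced-spatial-graphs-using-grid-homology}, where it is established entirely inside grid homology by constructing and tracking maps on $tGC^-(\G)$ induced by grid moves realizing a concordance; no holomorphic input is used. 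Your derivation instead imports the concordance invariance of the original $\Upsilon_K$, whose proof (via properties (3) and (6)) rests on the analytic Heegaard Floer machinery. What your route buys is brevity once Theorem~\ref{thm:Upsilon} is in hand; what it gives up is precisely the point emphasized in the introduction, namely F\"{o}ldv\'{a}ri's question of whether the properties of the Upsilon invariant can be proved \emph{in the framework of grid homology}. (Note also that the combinatorial proof predates the equivalence of the two invariants, so it cannot be regarded as subsumed by your argument.) Your closing remark acknowledging the self-contained grid-homological proof is the right thing to include; I would make explicit there that the two proofs have different foundational status, not merely different lengths.
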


\begin{rem}
Ozsv\'{a}th-Stipsicz-Szab\'{o} \cite{Unoriented-knot-Floer-homology-and-the-unoriented-four-ball-genus} first use grid homology to study the upsilon invariant $\upsilon_K=\Upsilon_K(1)$.
They observed the complex $GC'(\G)$ that is obtained from $\mathcal{GC}^-(\G)$ by collapsing the grading $\delta=M-A$, where $M$ is the $\Z$-grading and $A$ is the $\Z$-filtration level.
The $t$-modified complex $tGC^-(\G)$ is the extension of $GC'(\G)$ because $GC'(\G)$ corresponds $tGC^-(\G)$ of $t=1$.
\end{rem}

\section{The proof of the main theorems}

Let $\mathcal{W}$ be the two-dimensional vector space with one generator in grading zero and filtration level zero, and another in grading $-1$ and filtration level $-1$.
For a $\mathbb{Z}$-graded, $\mathbb{Z}$-filtered chain complex $\mathcal{C}$ over $\mathbb{F}[U]$, we get a new chain complex $\mathcal{C}\otimes \mathcal{W}$, with
\[
\mathcal{F}_i(\mathcal{C}\otimes \mathcal{W})_d=\mathcal{F}_i\mathcal{C}_d\oplus \mathcal{F}_{i+1}\mathcal{C}_{d+1}.
\]

\begin{lem}
\label{lem:u1=un}
Let $\G$ be an $n \times n$ grid diagram.
Then $\mathcal{GC}^-(\G)\otimes \mathcal{W}^{\otimes (n-1)}$ and $\frac{\mathcal{GC}^-(\G)}{U_1=\dots=U_n}$ are filtered chain homotopy equivalent.
\end{lem}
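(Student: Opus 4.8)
The plan is to peel off the copies of $\mathcal{W}$ one at a time, realizing each successive quotient as a mapping cone. For $1\le i\le n$ put $C_i=\mathcal{GC}^-(\G)/(U_1=\dots=U_i)$, a finitely generated $\mathbb{Z}$-graded, $\mathbb{Z}$-filtered chain complex over $\mathbb{F}[U]$, where $U$ denotes the common image of $U_1,\dots,U_i$; thus $C_1=\mathcal{GC}^-(\G)$ and $C_n=\mathcal{GC}^-(\G)/(U_1=\dots=U_n)$. It suffices to produce, for each $1\le i\le n-1$, a filtered chain homotopy equivalence $C_{i+1}\simeq C_i\otimes\mathcal{W}$ over $\mathbb{F}[U]$; the lemma then follows by induction on $i$.

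Fix $i$. The complex $C_i$ is free over $\mathbb{F}[U,U_{i+1},\dots,U_n]$ on the grid states, so multiplication by $U_1-U_{i+1}$ — an endomorphism of $C_i$ that drops the Maslov grading by $2$ and the Alexander filtration by $1$ — is injective, and $C_i/(U_1-U_{i+1})C_i=C_{i+1}$. The short exact sequence $0\to C_i\xrightarrow{U_1-U_{i+1}}C_i\to C_{i+1}\to 0$ consists of free $\mathbb{F}[U]$-modules, so it splits, and the splitting can be chosen compatibly with the filtration; hence $C_{i+1}$ is filtered chain homotopy equivalent over $\mathbb{F}[U]$ to the mapping cone of $U_1-U_{i+1}\colon C_i\to C_i$ (taken with the grading and filtration shifts forced by its bidegree $(-2,-1)$).

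The crux is that $U_1-U_{i+1}$ is \emph{filtered null-homotopic} on $\mathcal{GC}^-(\G)$. Since $\G$ represents a knot, $O_1$ and $O_{i+1}$ lie on a common component, so the classical chain homotopy identifying multiplication by $U_1$ with multiplication by $U_{i+1}$ — a composite of the operators counting rectangles through the $X$-markings met while traveling along the knot from $O_1$ to $O_{i+1}$ — is available, and on the filtered grid complex it upgrades to a filtered chain homotopy (it is compatible with the Alexander filtration, possibly lowering it, which does no harm). Being $\mathbb{F}[U_1,\dots,U_n]$-linear, this homotopy descends to the quotient $C_i$ and yields a filtered null-homotopy of $U_1-U_{i+1}$ there. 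Diagonalizing the differential of the mapping cone by it produces a filtered isomorphism $\operatorname{Cone}(U_1-U_{i+1}\colon C_i\to C_i)\cong C_i\otimes\mathcal{W}$: the cone of a filtered-null-homotopic map of bidegree $(-2,-1)$ splits as the target copy of $C_i$ (in bidegree $(0,0)$) together with a copy of the source shifted to bidegree $(-1,-1)$, and this is exactly $C_i\otimes\mathcal{W}$ for $\mathcal{W}$ as normalized just before the lemma. Combined with the previous paragraph this gives $C_{i+1}\simeq C_i\otimes\mathcal{W}$, and the induction goes through.

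I expect the real obstacle to be the step just flagged: confirming that the standard homotopy witnessing $U_1\simeq U_{i+1}$, usually written only for the associated graded complex, genuinely lifts to a filtered chain homotopy on $\mathcal{GC}^-(\G)$ with the correct behavior on the Alexander filtration. The remaining points — the precise bidegree shifts making the cone equal to $C_i\otimes\mathcal{W}$, the filtered splitting of the short exact sequence, and the diagonalization of the cone — are routine bookkeeping. One can cut down the explicit cone manipulation by observing that the projection from $\operatorname{Cone}(U_1-U_{i+1}\colon C_i\to C_i)$ to $C_{i+1}$ is automatically a filtered quasi-isomorphism (because $U_1-U_{i+1}$ is injective on $C_i$) and invoking Proposition \ref{prop:quasi-iso-homotopy} to promote it to a filtered chain homotopy equivalence over $\mathbb{F}[U]$; even so, the filtered null-homotopy of $U_1-U_{i+1}$ is still what identifies this cone with $C_i\otimes\mathcal{W}$.
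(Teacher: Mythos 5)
Your proof is correct and is essentially the argument the paper invokes: the paper's proof is just the citation ``combine Lemma 14.1.11 and Proposition A.8.1 of \emph{Grid Homology for Knots and Links}'', and your iterated mapping-cone argument --- realizing each quotient $C_{i+1}=C_i/(U_1-U_{i+1})$ as the cone of $U_1-U_{i+1}$, using the filtered null-homotopy coming from the rectangle-counting operators through the $X$-markings, splitting the cone of a null-homotopic map of bidegree $(-2,-1)$ as $C_i\otimes\mathcal{W}$, and invoking Proposition \ref{prop:quasi-iso-homotopy} to upgrade the filtered quasi-isomorphism to a filtered chain homotopy equivalence --- is precisely the proof of the cited lemma. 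The only slip is terminological: the total homotopy witnessing $U_1\simeq U_{i+1}$ is a telescoping \emph{sum} of the one-step homotopies $H_k$ with $\partial H_k+H_k\partial=U_{a_k}+U_{a_{k+1}}$, not a composite.
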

\begin{proof}
Combine \cite[Lemma 14.1.11]{Grid-homology-for-knots-and-links} and \cite[Proposition A.8.1]{Grid-homology-for-knots-and-links}.
\end{proof}

\begin{proof}[Proof of Theorem \ref{thm:Upsilon}]
Let $\G$ be an $n \times n$ grid diagram for $K$.
It is sufficient to consider the case where $t\in[0,1]$ since the grid Upsilon invariant satisfies $\Upsilon_\G(t)=\Upsilon_\G(2-t)$ by definition.

By Theorem \ref{thm:grid=HFK}, the filtered grid chain complex $\mathcal{GC}^-(\G)$ (regarded as $\mathbb{F}[U]$-module by letting $U=U_1$) and $CFK^-(K)$ are filtered chain homotopy equivalent as complexes over $\mathbb{F}[U]$.
Using Theorem \ref{thm:grid=HFK} and Lemma \ref{lem:u1=un}, we have
\[
\frac{\mathcal{GC}^-(\G)}{U_1=\dots=U_n} \simeq \mathcal{GC}^-(\G)\otimes \mathcal{W}^{\otimes (n-1)} \simeq CFK^-(K)\otimes \mathcal{W}^{\otimes (n-1)}.
\]
Then Proposition \ref{prop:f-f^t} gives
\[
tGC^-(\G)=\left(\frac{\mathcal{GC}^-(\G)}{U_1=\dots=U_n}\right)^t \simeq (CFK^-(K)\otimes \mathcal{W}^{\otimes (n-1)})^t \simeq tCFK(K)\otimes W_t^{\otimes (n-1)},
\]
where $W_t$ is the two-dimensional graded vector space $W_t\cong\mathbb{F}_{0}\oplus\mathbb{F}_{-1+t}$, where their indices describe t-gradings.
Therefore we obtain $\Upsilon_{\G}(t)=\Upsilon_K(t)$ since the grading shift from $W_t$ does not affect the value of $\Upsilon_{\G}(t)$ for $t\in[0,1]$.
\end{proof}

\begin{rem}
The chain homotopy equivalent $tGC^-(\G) \simeq tCFK(K)\otimes W_t^{\otimes (n-1)}$ in the proof is the generalization of \cite[Corollary 2.8]{Unoriented-knot-Floer-homology-and-the-unoriented-four-ball-genus}
\end{rem}

\begin{lem}[{\cite[Lemma 14.2.1]{Grid-homology-for-knots-and-links}}]
\label{lem:finite-complex}
The $\Z$-graded, $\Z$-filtered chain complex $\mathcal{GC}^-(\G)$ is filtered chain homotopy equivalent to the complex which is a free module over $\mathbb{F}[U]$ with rank $\dim_\mathbb{F}\widehat{GH}(\G)$. 
\end{lem}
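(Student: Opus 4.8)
The plan is to reduce the filtered grid complex to a minimal model by a standard Gaussian-elimination argument on the associated graded object, and then appeal to Proposition \ref{prop:quasi-iso-homotopy} to lift the conclusion from the quasi-isomorphism level to the filtered chain homotopy level. First I would recall that $\mathcal{GC}^-(\G)$ is a finitely generated, $\Z$-graded, $\Z$-filtered free complex over $\mathbb{F}[U]$. Passing to the associated graded object $\mathrm{gr}(\mathcal{GC}^-(\G))$, we obtain a finitely generated bigraded complex of free $\mathbb{F}[U]$-modules whose homology is $GH^-(\G)$. Over the PID $\mathbb{F}[U]$ one can run Gaussian elimination: whenever the differential $\mathrm{gr}(\partial)$ contains a component which is an isomorphism (up to a power of $U$) between two basis elements, cancel that pair, producing a smaller homotopy-equivalent bigraded complex. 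Iterating until no invertible components remain yields a bigraded complex $C'$ whose differential has all entries in the maximal ideal $(U)$; hence $C' \otimes_{\mathbb{F}[U]} \mathbb{F} = C'/U$ has zero differential, so its homology is $C'/U$ itself, and by the universal coefficient/Künneth computation this has rank equal to $\dim_{\mathbb{F}} \widehat{GH}(\G)$. Since the $U$-action on a minimal complex has no free summand obstruction of the wrong rank, $C'$ is free over $\mathbb{F}[U]$ of rank $\dim_{\mathbb{F}} \widehat{GH}(\G)$.

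The second step is to promote this to a filtered statement. The cancellation moves above must be carried out \emph{filtered}: one eliminates a pair $(\mathbf{x},\mathbf{y})$ only when the relevant component of $\partial$ strictly respects (does not increase) the filtration, which is exactly the case for components of $\mathrm{gr}(\partial)$, and then checks that the induced differential on the quotient complex remains filtered — this is the content of the filtered Gaussian elimination lemma used in \cite{Grid-homology-for-knots-and-links}. Performing these eliminations on $\mathcal{GC}^-(\G)$ itself (not merely on its associated graded) produces a filtered complex $\widetilde{C}$, filtered chain homotopy equivalent to $\mathcal{GC}^-(\G)$, whose associated graded is the minimal bigraded complex $C'$ above. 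Because $\mathrm{gr}(\widetilde{C}) = C'$ is free over $\mathbb{F}[U]$ of the asserted rank, $\widetilde{C}$ is itself free over $\mathbb{F}[U]$ of that rank, and we are done.

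Alternatively, and more cleanly, one can argue: take a minimal free model $\widetilde{C}$ of $\mathcal{GC}^-(\G)$ in the filtered category (which exists by the filtered reduction algorithm), observe that $\mathrm{gr}(\widetilde{C})$ is then a minimal bigraded free model of $\mathrm{gr}(\mathcal{GC}^-(\G))$, so that the filtered chain homotopy equivalence $\widetilde{C}\simeq \mathcal{GC}^-(\G)$ is in particular a filtered quasi-isomorphism; Proposition \ref{prop:quasi-iso-homotopy} guarantees consistency, and the rank of $\widetilde{C}$ is read off from $H(\mathrm{gr}(\widetilde{C})/U)=\widehat{GH}(\G)$. The main obstacle I anticipate is purely bookkeeping rather than conceptual: one must verify carefully that each elimination step preserves \emph{both} the $\Z$-grading compatibility and the boundedness-below filtration, and that the $U$-equivariance survives, so that at the end $\widetilde{C}$ is genuinely a complex of the type allowed in Lemma \ref{lem:u1=un} and the subsequent arguments. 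No new homological input beyond the structure theorem for finitely generated modules over $\mathbb{F}[U]$ and the filtered Gaussian elimination lemma is required.
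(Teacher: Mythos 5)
The paper offers no proof of this lemma at all --- it is imported verbatim from \cite[Lemma 14.2.1]{Grid-homology-for-knots-and-links} --- so your argument must be judged against the proof strategy of that source, which is indeed filtered Gaussian elimination down to a reduced model. The difficulty is that your proof opens with a false premise that assumes away the entire content of the statement: $\mathcal{GC}^-(\G)$ is \emph{not} finitely generated over $\mathbb{F}[U]$. It is finitely generated and free over $\mathbb{F}[U_1,\dots,U_n]$, hence free of \emph{infinite} rank over $\mathbb{F}[U]=\mathbb{F}[U_1]$; the paper itself stresses exactly this point later (``$\mathcal{GC}^-(\G)$ is not finitely generated as $\mathbb{F}[U_i]$-module''), and producing a finitely generated model over $\mathbb{F}[U]$ is the whole reason the lemma is needed before $t$-modification can be applied. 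Your instruction to ``iterate until no invertible components remain'' has no reason to terminate on an infinitely generated complex, so the reduction to a finite-rank free module --- the heart of the claim --- is not established. One must first eliminate the extra variables $U_2,\dots,U_n$ (for instance using the filtered homotopies identifying multiplication by $U_i$ with multiplication by $U_1$, or by routing through the finitely generated quotient $\mathcal{GC}^-(\G)/(U_1=\dots=U_n)$ as in Lemma \ref{lem:u1=un}) before any finite cancellation argument can begin.

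Two smaller inaccuracies. First, over $\mathbb{F}[U]$ one can only cancel a component of the differential whose coefficient is a unit, i.e.\ $U^0$; a component that is ``an isomorphism up to a power of $U$'' with positive power is not invertible and cannot be eliminated. Second, to keep the induced differential filtered one must cancel only components that preserve the filtration level exactly --- your second paragraph says this correctly, but it contradicts the conclusion of your first paragraph: the reduced complex $C'$ one actually reaches satisfies only $\mathrm{gr}(\partial')\equiv 0 \pmod U$, not that every entry of $\partial'$ lies in $(U)$, so $C'/U$ need not have vanishing differential. The rank count survives because $\widehat{GH}(\G)$ is by definition the homology of the \emph{associated graded} of $\mathcal{GC}^-(\G)/U_1$, whence $\dim_{\mathbb{F}}\widehat{GH}(\G)=\dim_{\mathbb{F}}(C'/U)=\operatorname{rank}_{\mathbb{F}[U]}C'$; you should carry out that computation at the level of the associated graded rather than of $C'/U$ itself.
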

We remark that $\dim_\mathbb{F}\widehat{GH}(\G)$ is finite and that this lemma is given algebraically without holomorphic theory.

\begin{proof}[Proof of Proposition \ref{prop:mirror}]
Let $\G$ be a grid diagram representing $K$ and $\G^*$ be the horizontal reflection of $\G$.
Then $\G^*$ represents $m(K)$.
By the same argument as the proof of \cite[Propositions 7.4.3]{Grid-homology-for-knots-and-links}, reflection $\G^*\mapsto\G$ induces an isomorphism of $\Z$-graded, $\Z$-filtered chain complexes
\[
\frac{\mathcal{GC}^-(\G^*)}{U_1=\dots=U_n}\to \left( \frac{\mathcal{GC}^-(\G)}{U_1=\dots=U_n} \right)^* \llbracket n-1,n-1\rrbracket,
\]
where $C^*=\mathrm{Hom}(C,\mathbb{F}[U])$ is the dual complex of $C$ and $\llbracket a,b\rrbracket$ denotes a shift of the grading and the filtration.

Let $\mathcal{C}$ be a finitely generated $\Z$-graded, $\Z$-filtered chain complex over $\mathbb{F}[U]$.
For $t\in[0,1]$, let 
\[
\Upsilon_\mathcal{C}(t):=\mathrm{max}\{\mathrm{gr}_t(x)|x\in H(\mathcal{C}^t),x\textrm{ is homogeneous, non-torsion}\},
\]
and $\Upsilon_\mathcal{C}(t)=\Upsilon_\mathcal{C}(2-t)$ for $t\in(1,2]$.
Clearly $\Upsilon_{\mathcal{C}}\equiv\Upsilon_\mathcal{D}$ if $\mathcal{C}\simeq\mathcal{D}$.
Furthermore, we have $\Upsilon_{\mathcal{C}\otimes\mathcal{W}}\equiv\Upsilon_\mathcal{C}$ since the grading shift from $\mathcal{W}$ does not affect the value of $\Upsilon$ for $t\in[0,1]$.

Because the operation $t$-modification is defined for finitely generated complexes but $\mathcal{GC}^-(\G)$ is not finitely generated as $\mathbb{F}[U_i]$-module for $i=1,\dots,n$, we need to use Lemma \ref{lem:finite-complex}.
Let $\mathcal{C}$ be a finitely generated chain complex which is filtered chain homotopy equivalent to $\mathcal{GC}^-(\G)$ obtained by applying Lemma \ref{lem:finite-complex}.
Let $\mathcal{W}^*$ be the two-dimensional vector space with one generator in grading zero and filtration level zero, and another in grading $1$ and filtration level $1$.
By Lemma \ref{lem:u1=un}, we have
\begin{align*}
\frac{\mathcal{GC}^-(\G^*)}{U_1=\dots=U_n} &\cong \left( \frac{\mathcal{GC}^-(\G)}{U_1=\dots=U_n} \right)^* \llbracket n-1,n-1\rrbracket\\
 &\simeq (\mathcal{GC}^-(\G)\otimes \mathcal{W}^{\otimes (n-1)})^*\llbracket n-1,n-1\rrbracket\\
 &\simeq (\mathcal{GC}^-(\G))^* \otimes (\mathcal{W}^*)^{\otimes (n-1)}\llbracket n-1,n-1\rrbracket\\
 &\simeq (\mathcal{GC}^-(\G))^* \otimes \mathcal{W}^{\otimes (n-1)}\\
 &\simeq \mathcal{C}^* \otimes \mathcal{W}^{\otimes (n-1)}.
\end{align*}

By the universal coefficient theorem and the construction of dual complexes of $t$-modified complexes, we have $\Upsilon_{\mathcal{C}^*}\equiv-\Upsilon_\mathcal{C}$.
Then we have 
\[\Upsilon_{\G^*}\equiv\Upsilon_{\mathcal{C}^*\otimes \mathcal{W}^{\otimes (n-1)}}\equiv\Upsilon_{\mathcal{C}^*}\equiv-\Upsilon_\mathcal{C}\equiv-\Upsilon_\G,
\]
and thus $\Upsilon_{m(K)}\equiv-\Upsilon_K$.
\end{proof}

\begin{proof}[Proof of Theorem \ref{thm:quasi-alt}]
The idea is the same as the proof of \cite[Theorem 1.14]{Concordance-homomorphisms-from-knot-Floer-homology}.
Since we can use Theorem \ref{thm:quasi-GH-determined} and Proposition \ref{prop:mirror} that are combinatorially proved, the same argument as the proof of \cite[Theorem 1.14]{Concordance-homomorphisms-from-knot-Floer-homology} works.

Let $\G$ be an $n\times n$ grid diagram for $K$ and $\mathcal{C}$ be a finitely generated filtered complex over $\mathbb{F}[U]$ obtained from $\mathcal{GC}^-(\G)$ by Lemma \ref{lem:finite-complex}.
Since $\mathcal{C}\simeq tCFK(K)\otimes \mathcal{W}^{\otimes (n-1)}$, we have
\[
H(\mathcal{C})/\mathrm{Tors}(H(\mathcal{C}))\cong  \mathbb{F}[U]\otimes \mathcal{W}^{\otimes (n-1)} \cong \bigoplus_{i=0}^{n-1} (\mathbb{F}[U]\llbracket i,i\rrbracket) ^{c_i},
\]
where $c_i=\binom{n-1}{i}$.

Now, with a slight modification, we apply the same argument as the proof of \cite[Theorem 1.14]{Concordance-homomorphisms-from-knot-Floer-homology}.
By Proposition \ref{prop:mirror}, we can assume that $-\frac{\sigma(K)}{2}\geq 0$.
We can take $2^{n-1}$ sequences of elements $x^i_0,\dots,x^i_n,y^i_0,\dots,y^i_{n-1}$ and integers $k_i$ for $i=1,\dots,2^{n-1}$ in $\mathcal{C}$ satisfying the followings:
\begin{itemize}
    \item For each $i$ and $j$, we have $\partial y^i_j=U x^i_j+x^i_{j+1}$,
    \item For each $i$ and $j$, we have $A(x^i_j)=-\frac{\sigma(K)}{2}-2j-k_i$ and $M(x^i_j)=-2j-k_i$.
    \item Let $d_i=c_0+\dots+c_{i}$. Then we have $k_1=0$, $k_2=\dots=k_{d_1}=1$, $k_{d_1+1}=\dots=k_{d_2}=2$, \dots, and $k_{d_{n-2}+1}=\dots=k_{d_n-1}=n-1$.
\end{itemize}
Then, the elements $x^1_0, \dots, x^{2^{n-1}}_0$ represent non-torsion generators for each summand of $H(\mathcal{C})/\mathrm{Tors}(H(\mathcal{C}))\cong \mathbb{F}[U]^{2^{n-1}}$.
Furthermore, any non-torsion class must contain at least one of the $x^i_j$.
When $t\in[0,1]$, we have 
\[
\max_{i}\mathrm{gr}_t(x^i_0)=\mathrm{gr}_t(x^0_0)=\frac{\sigma(K)}{2}\cdot t.
\]
Therefore we have
\[
\Upsilon_\mathcal{C}(t)=\mathrm{gr}_t(x^0_0)=\frac{\sigma(K)}{2}\cdot t.
\]
The symmetry $\Upsilon(2-t)=\Upsilon(t)$ completes the proof.
\end{proof}

\section{Acknowledgement}
I am grateful to my supervisor, Tetsuya Ito, for helpful discussions and corrections.
This work was supported by JST, the establishment of university fellowships towards
the creation of science technology innovation, Grant Number JPMJFS2123.

\bibliography{grid}
\bibliographystyle{amsplain} 

\end{document}